\newcommand\C{{\mathbb C}}
\newcommand\Q{{\mathbb Q}}
\newcommand\Z{{\mathbb Z}}
\newcommand\N{{\mathbb N}}
\newcommand\cN{{\mathcal{N}}}
\newcommand\cD{\mathcal{D}}
\newcommand\al{\alpha}
\newcommand\be{\beta}
\newcommand\eps{\varepsilon}
\newcommand\bk{\textbf{k}}
\newcommand\lcm{\mathrm{lcm}}
\newtheorem{theorem}{Theorem}[section]
\newtheorem{lemma}[theorem]{Lemma}
\newtheorem{corollary}[theorem]{Corollary}
\newtheorem{conjecture}[theorem]{Conjecture}
\theoremstyle{remark}
\numberwithin{equation}{section}
\begin{document}

\title[Polynomials with multiplicatively dependent roots]{On the number of integer polynomials with multiplicatively dependent roots}

\author{Art\= uras Dubickas}
\address{Institute of Mathematics, Faculty of Mathematics and Informatics, Vilnius University, 
Naugarduko 24, Vilnius LT-03225,  Lithuania}
\email{arturas.dubickas@mif.vu.lt}

\author{Min Sha}
\address{Department of Computing, Macquarie University,
Sydney, NSW 2109, Australia}
\email{shamin2010@gmail.com}

\keywords{Multiplicative dependence, degenerate polynomial, generalized degenerate polynomial}

\subjclass[2010]{11C08, 11N25, 11N45}

\begin{abstract}
In this paper, we give some counting results on  integer polynomials of fixed degree and bounded height whose distinct non-zero roots are multiplicatively dependent. 
These include sharp lower bounds, upper bounds and asymptotic formulas for various cases, although in general there is a logarithmic gap between lower and upper bounds. 
\end{abstract}

\maketitle

\section{Introduction}  \label{sec:int}

\subsection{Motivation}

Let $n\ge 2$ be a positive integer. For non-zero complex numbers $z_1,\dots,z_n \in \C^*$, we say that they are \textit{multiplicatively dependent} (resp. \textit{linearly dependent})
if there exists a non-zero integer vector $(k_1,\dots,k_n) \in \Z^n$ for which
$$
z_1^{k_1}\cdots z_n^{k_n}=1
$$ 
(resp. 
$k_1z_1 + \cdots + k_nz_n=0$).

Throughout, the \textit{height} of a complex polynomial in $\C[X]$ is defined to be the largest modulus of its coefficients. 
For a polynomial $f\in \C[X]$ of degree at least two, we say that $f$ is \textit{degenerate} if it has a pair of distinct non-zero roots whose quotient is a root of unity. 
In \cite{dusa1}, the same authors have established sharp bounds for the number of degenerate integer polynomials of fixed degree and bounded height. 

Here, we say that $f$ is a \textit{generalized degenerate polynomial} if its distinct non-zero roots are multiplicatively dependent. 
Clearly, if $f$ is degenerate, then it is also a generalized degenerate polynomial. 
Our aim is to estimate the number of generalized degenerate integer polynomials of fixed degree and bounded height. 
Our results show that these polynomials are sparse. 
We remark that these results can be interpreted in another way: the number of algebraic integers (or algebraic numbers) of fixed degree and bounded height 
whose conjugates are multiplicatively dependent. 

In fact, the additive and multiplicative relations in conjugate algebraic numbers have been extensively studied. 
There are two typical problems. One is to detect whether there is an additive or multiplicative relation among the roots of a given irreducible polynomial, 
such as providing necessary or sufficient conditions; 
see, for instance, \cite{bds,Dixon,ds1,ds2,girst,kurb1,kurb2,kurb3,smy2} and more recently \cite{fl2,vall}. 
The other is to investigate which numbers can be represented by an additive or multiplicative relation in conjugate algebraic numbers 
for a fixed non-zero integer vector $(k_1,\dots,k_n)$; see \cite{dub,Du2003,DuSm,smy2}. 
In this paper, we want to investigate how often an additive or multiplicative relation of polynomial roots can occur among integer polynomials, i.e., to study these relations from the counting aspect. 
Our results suggest that except for some obvious cases these relations occur rarely. In some sense this corresponds to the results of Drmota
and Ska{\l}ba given in \cite{ds2}  (even though the normalization and the methods used are completely different): it was shown there that the multiplicative relations among conjugates of algebraic integers lying
in a fixed normal extension $F$ of $\Q$ occur rarely.

\subsection{The monic case}

From now on, suppose that $H \ge 3$ is a positive integer.  
Here and below, by $\# S$ we denote the number of elements of a finite set $S$. 
Also, for two functions $U=V(n,H)$ and $V=V(n,H)$ we will write $U \ll V$ or $U=O(V)$  if the inequality $|U| \leq c|V|$ holds for some positive constant $c$ depending on 
$n$ only but not on $H$ (except for the case when $\varepsilon$ appears, where the implied constant also depends on $\varepsilon$).  
Besides, $U \asymp V$ means that $U \ll V \ll U$. 

Let $M_n(H)$ be the set of generalized degenerate monic integer polynomials
of degree $n$ and height at most $H$. Evidently,
\begin{equation}\label{jons20}
\# M_n(H) \geq 2(2H+1)^{n-1} > 2^{n} H^{n-1},
\end{equation}
because each monic integer polynomial
of degree $n$ and height at most $H$ with constant coefficient $\pm 1$ belongs to the set $M_n(H)$.

To start with, by combining several results from different sources, we can obtain the following upper bound 
\begin{equation}\label{derfis}
\#M_n(H) \ll H^{n-1+\delta(n)+\eps}, 
\end{equation}
 where $\eps>0$, $n \geq 4$, $\delta(n)=1/n$ for $4 \leq n \leq 8$ and $\delta(n)=2/{n \choose \lfloor n/2 \rfloor}$ for $n \geq 9$.
 
Firstly, by the main result in \cite{Chela}, we have
\begin{equation}\label{poij1}
\#\{ f \in M_n(H) \>:\> f  \>\> \text{reducible in} \>\> \Z[X]\} \asymp H^{n-1}.
\end{equation}
Secondly, by \cite[Theorem 2]{dusa1}, 
\begin{equation}\label{poij2}
\#\{ f \in M_n(H) \>:\> f  \>\> \text{degenerate and irreducible in} \>\> \Z[X]\} \asymp H^{n/p},
\end{equation}
where $p$ is the smallest prime divisor of $n$. 
So, in view of \eqref{poij1}, \eqref{poij2} and the fact that the number of polynomials $f \in M_n(H)$ satisfying 
$f(0)=\pm 1$ does not exceed $2(2H+1)^{n-1}$, in order to prove \eqref{derfis} it remains to show that the bound \eqref{derfis} holds for
the polynomials $f \in M_{n}(H)$ which are irreducible, non-degenerate and satisfy $|f(0)| \geq 2$. 

Now, by the next lemma which is a consequence of \cite[Theorem 3]{bds}, 
we can further restrict this set to the set of polynomials whose Galois group is not $2$-transitive.  

\begin{lemma} 
\label{lem:mure}
Suppose that $\alpha$ is an algebraic number of degree $n \geq 2$ over
$\Q$ such that its conjugates $\alpha_1=\alpha,\alpha_2,\ldots,\alpha_n$
 are multiplicatively dependent, 
 and the Galois group of the field $\Q(\al_1,\dots,\al_n)$
over $\Q$ is $2$-transitive. Then, either $\alpha_1\cdots \alpha_n = \pm 1$ or for some positive integer $N$ we have $\al_1^N=\dots=\al_n^N$ (and so the minimal polynomial of $\al$ is
degenerate). 
\end{lemma}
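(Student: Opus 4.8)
The plan is to analyze the lattice of multiplicative relations $\Lambda=\{(k_1,\dots,k_n)\in\Z^n \,:\, \alpha_1^{k_1}\cdots\alpha_n^{k_n}=1\}$ as a module over the Galois group $G=\Gal(\Q(\alpha_1,\dots,\alpha_n)/\Q)$. First I would observe that any $\sigma\in G$ sends a relation $\prod_i\alpha_i^{k_i}=1$ to another relation of the same type, with the exponents permuted exactly as $\sigma$ permutes the conjugates $\alpha_1,\dots,\alpha_n$; hence $\Lambda$ is invariant under the natural permutation action of $G$ on $\Z^n$, $G$ acting through its image in $S_n$, which by hypothesis is a $2$-transitive subgroup. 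Since the conjugates are multiplicatively dependent we have $\Lambda\neq\{0\}$, so $V:=\Lambda\otimes_{\Z}\Q$ is a nonzero $\Q[G]$-submodule of $\Q^n$.

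Next I would bring in the representation theory of $2$-transitive permutation groups. Writing $\pi$ for the permutation character of $G$ on $\{1,\dots,n\}$ and $\mathbf 1=(1,\dots,1)\in\Q^n$, one has $\langle\pi,\pi\rangle=2$, so $\chi:=\pi-\mathbf 1_G$ is an irreducible character (it has norm $1$ and positive degree), and correspondingly $\Q^n=\Q\mathbf 1\oplus W$ as $\Q[G]$-modules, where $W=\{x\in\Q^n:\sum_i x_i=0\}$. The complexification of $W$ affords $\chi$ and is therefore irreducible, so $W$ is irreducible over $\Q$; moreover $W\not\cong\Q\mathbf 1$, their dimensions differing when $n\ge 3$ and a transposition acting on $W$ by $-1$ when $n=2$. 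Hence the only nonzero $\Q[G]$-submodules of $\Q^n$ are $\Q\mathbf 1$, $W$ and $\Q^n$, and so either $W\subseteq V$ or $V=\Q\mathbf 1$.

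If $W\subseteq V$, then for every pair $i\neq j$ the vector $e_i-e_j$ lies in $W\subseteq\Lambda\otimes\Q$ (with $e_1,\dots,e_n$ the standard basis of $\Q^n$), so $N_{ij}(e_i-e_j)\in\Lambda$ for some positive integer $N_{ij}$, i.e.\ $\alpha_i^{N_{ij}}=\alpha_j^{N_{ij}}$; taking $N$ to be the least common multiple of the finitely many $N_{ij}$ gives $\alpha_1^N=\dots=\alpha_n^N$ (one may instead produce a single relation $\alpha_1^N=\alpha_2^N$ and transport it to all pairs via $2$-transitivity). Then $(\alpha_1/\alpha_2)^N=1$ with $\alpha_1\neq\alpha_2$ and both nonzero, so the minimal polynomial of $\alpha$ is degenerate, as claimed. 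If instead $V=\Q\mathbf 1$, then $N\mathbf 1\in\Lambda$ for some positive integer $N$, that is $(\alpha_1\cdots\alpha_n)^N=1$; but $\alpha_1\cdots\alpha_n$ equals $\pm$ the ratio of the constant and leading coefficients of the minimal polynomial of $\alpha$, hence is rational, and the only rational roots of unity are $\pm1$, so $\alpha_1\cdots\alpha_n=\pm1$.

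The one genuinely substantive step is the classification of the $\Q[G]$-submodules of the permutation module $\Q^n$, which rests on the irreducibility of the standard representation of a $2$-transitive group; everything after that is routine bookkeeping. The minor points needing a little care are passing from the pairwise relations to a single common exponent $N$, and noting that $\alpha_1\cdots\alpha_n$ is rational so that it cannot be a root of unity other than $\pm1$.
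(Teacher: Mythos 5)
Your argument is correct, and all the delicate points are handled: the $G$-invariance of the relation lattice $\Lambda$, the irreducibility of $W=\{x\in\Q^n:\sum_i x_i=0\}$ coming from $\langle\pi,\pi\rangle=2$, the case $n=2$ (where $W$ is the sign module, hence still not isomorphic to the trivial line), clearing denominators to pass from $\Lambda\otimes_{\Z}\Q$ back to $\Lambda$, and the rationality of $\alpha_1\cdots\alpha_n$ forcing a rational root of unity to be $\pm1$. Note, however, that the paper gives no proof of Lemma~\ref{lem:mure} at all: it is stated as a consequence of Theorem~3 of Baron, Drmota and Ska{\l}ba \cite{bds}. So the comparison is between your self-contained argument and that citation. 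Your mechanism --- decomposing the permutation module as $\Q\mathbf{1}\oplus W$ with $W$ irreducible for a $2$-transitive group, so that the $\Q$-span of $\Lambda$ can only be $\Q\mathbf{1}$, $W$ or $\Q^n$, the first alternative giving $(\alpha_1\cdots\alpha_n)^N=1$ and the other two giving $\alpha_i^N=\alpha_j^N$ for all $i,j$ --- is precisely the representation-theoretic idea underlying the cited result, so in spirit the approaches coincide; what your write-up buys is a short self-contained proof rather than an appeal to \cite{bds}. One small simplification you already hint at: from $\alpha_1^N=\alpha_2^N$ alone, $2$-transitivity transports the relation with the same exponent $N$ to every pair, so no least common multiple over pairs is needed.
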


Both the full symmetric group $S_n$ and the alternating group $A_n$ are $2$-transitive for $n \geq 4$.
Therefore, by Lemma~\ref{lem:mure} and 
the results of Dietmann \cite[Theorem 1 and Corollary 1]{Dietmann}, we derive the bound \eqref{derfis}. 
(Here, we also use a well-known fact that for a subgroup $G$ of $S_n$, if $G\ne S_n, A_n$, then $\# S_n / G \ge n$.)  

Let $L_n(H)$ be the set of monic integer polynomials of degree $n$ and height at most $H$ whose distinct roots are linearly dependent.
Then, applying similar arguments (note that \cite[Theorem 3]{bds} also gives a result on linear dependence), we obtain
\begin{equation}   \label{eq:linear1}
H^{n-1} \ll \# L_n(H) \ll H^{n-1+\delta(n) - \varepsilon}, \quad n \ge 4, 
\end{equation}
where the lower bound comes from those polynomials with zero coefficient for the term $X^{n-1}$. 

In this paper, we want to improve the bound \eqref{derfis}. More precisely, we will remove the factor $H^{\delta(n)}$ 
and replace the factor $H^{\varepsilon}$ by a logarithmic factor. 
Unfortunately, the same technique does not work for bounding the size of $L_n(H)$, so we do not know how to improve the upper bound in \eqref{eq:linear1} in general. We remark,
however, that some results below also hold for $L_n(H)$, especially when $n$ is a prime number.

Let  $I_n(H)$ (resp. $R_n(H)$) be the set of generalized degenerate monic irreducible (resp. reducible) integer polynomials
of degree $n$ and height at most $H$. 
Clearly, 
$$
\# M_n(H) = \# I_n(H) + \# R_n(H). 
$$  
We estimate $\# I_n(H)$ and  $\# R_n(H)$ separately. 

\begin{theorem}    \label{thm:irre}
For $\# I_n(H)$, we have:
\begin{itemize}
\item[(i)] for any integer $n\ge 2$, 
$$
H^{n-1} \ll \# I_n(H)  \ll H^{n-1} (\log H)^{2n^2-n-1}; 
$$

\item[(ii)] for any odd prime $p$, 
$$
\# I_p(H) = 2^p H^{p-1} + O(H^{p-2}), 
$$
and 
$$
\# I_2(H) = 6 H + O(H^{1/2}); 
$$

\item[(iii)]  $\# I_4(H)  \asymp H^3$. 
\end{itemize}
\end{theorem}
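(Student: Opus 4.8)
The plan is to prove the three parts in turn, with the bulk of the new work going into the upper bound in (i) and the lower bound in (iii); the asymptotics in (ii) will follow from a fairly direct analysis of the structure forced by multiplicative dependence together with a sieving estimate.

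For part (i), the lower bound $H^{n-1}\ll\#I_n(H)$ follows from a standard construction: count irreducible monic integer polynomials of degree $n$ and height at most $H$ with constant coefficient $\pm 1$ (so the roots have product $\pm 1$, making them trivially multiplicatively dependent), and observe that a positive proportion of polynomials with a prescribed constant term are irreducible. For the upper bound, I would first split $I_n(H)$ according to whether $f(0)=\pm 1$ (contributing $O(H^{n-1})$ as already noted) or $|f(0)|\ge 2$. For the latter family the roots $\alpha_1,\dots,\alpha_n$ are multiplicatively dependent with a relation $\prod\alpha_i^{k_i}=1$ in which not all exponents can be equal (else $f(0)^{k}=\pm 1$). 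The key is to bound the number of possible relation vectors: using heights, one shows there is always such a relation with $\max|k_i|$ bounded polynomially in $\log H$ — this is where a quantitative multiplicative-dependence bound (à la Loxton–van der Poorten / Matveev-type estimates, or the explicit bounds in \cite{bds}) enters, giving $\ll(\log H)^{c}$ admissible exponent vectors. For each fixed exponent vector $(k_1,\dots,k_n)$, the condition $\prod\alpha_i^{k_i}=1$ is a single algebraic equation on the coefficients of $f$; parametrizing $f$ by its $n$ coefficients (other than the leading one) and eliminating, one gets that the coefficient vectors lie on a proper subvariety, or more precisely that fixing $n-1$ of the coefficients leaves $O((\log H)^{c'})$ choices for the last — yielding $\#I_n(H)\ll H^{n-1}(\log H)^{2n^2-n-1}$ after bookkeeping the exponents of the logarithm. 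I expect the main obstacle here to be making the count of exponent vectors and the per-vector count explicit enough to land exactly on the stated power of $\log H$; this is the technical heart of the argument.

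For part (ii), let $p$ be an odd prime. By Lemma~\ref{lem:mure}, for an irreducible $f\in I_p(H)$ that is \emph{not} degenerate, the Galois group — if $2$-transitive — forces $\alpha_1\cdots\alpha_p=\pm 1$, i.e.\ $f(0)=\pm 1$; and if $\alpha_1^N=\dots=\alpha_p^N$ then the minimal polynomial is degenerate, which for prime $p$ forces (by \eqref{poij2} with smallest prime divisor $p$) only $\ll H^{p/p}=H$ polynomials, hence negligible relative to $H^{p-1}$. For the $2$-transitivity hypothesis: by \cite[Theorem 3]{bds}-type reasoning the relevant Galois group is essentially always $S_p$ or $A_p$ for a generic polynomial, and the count of $f$ whose Galois group is a proper transitive subgroup of $S_p$ is $O(H^{p-2})$ by the quoted Dietmann-type results (using $\#S_p/G\ge p$ again, which for $p$ prime is automatic for proper transitive $G$). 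Thus the main term is $\#\{f:\deg f=p,\ \mathrm{ht}(f)\le H,\ f(0)=\pm 1,\ f\text{ irreducible}\}$; the number with $f(0)=\pm 1$ is exactly $2(2H+1)^{p-1}=2^p H^{p-1}+O(H^{p-2})$, and subtracting the $O(H^{p-2})$ reducible ones gives $\#I_p(H)=2^p H^{p-1}+O(H^{p-2})$. The case $p=2$ is handled directly: $f=X^2+aX+b$ has multiplicatively dependent roots iff $b=\pm 1$ or the two roots have quotient a root of unity (the degenerate sub-case, contributing $O(H^{1/2})$), and counting irreducible quadratics with $|a|\le H$, $b=\pm1$ gives $6H+O(H^{1/2})$ after discarding the finitely many reducible ones with $b=\pm1$, which is $O(H^{1/2})$.

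For part (iii), the upper bound $\#I_4(H)\ll H^3$ is the case $n=4$ of (i) but \emph{without} the logarithmic factor, so it needs a sharper argument; I would use that for $n=4$ a non-degenerate irreducible quartic with multiplicatively dependent roots and $|f(0)|\ge 2$ satisfies a multiplicative relation with small exponents, and for $n=4$ one can enumerate the finitely many relevant exponent patterns up to symmetry and show each cuts out a variety on which, after fixing three coefficients, only $O(1)$ values of the fourth remain — the low degree making the elimination explicit and the logarithm collapse to a constant. The matching lower bound $H^3\ll\#I_4(H)$ comes again from the $f(0)=\pm1$ construction. The main obstacle for (iii) is verifying the $O(1)$-per-fibre bound for every exponent pattern in degree $4$, i.e.\ ruling out a positive-dimensional family of quartics satisfying some fixed nontrivial multiplicative relation other than $f(0)=\pm1$ — this should follow from the irreducibility/non-degeneracy hypotheses but requires checking the handful of cases.
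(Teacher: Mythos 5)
Your outline of part (i) follows the paper's route in spirit (Loxton--van der Poorten to force a relation with exponents $\ll(\log H)^{n-1}$, then count coefficient vectors killing a polynomial condition via a zero-counting lemma), and any power of $\log H$ not exceeding $2n^2-n-1$ would do. But parts (ii) and (iii) have genuine gaps. In (ii), for odd primes you run the argument through $2$-transitivity (Lemma~\ref{lem:mure}) and assert that degree-$p$ polynomials whose Galois group is a proper transitive but non-$2$-transitive subgroup of $S_p$ number $O(H^{p-2})$ ``by Dietmann-type results''. No such bound exists in the quoted literature: Dietmann's estimates give only $O(H^{p-1+\delta(p)+\varepsilon})$ with $\delta(p)>0$, which is \emph{larger} than the main term $2^pH^{p-1}$, so no asymptotic formula (let alone an $O(H^{p-2})$ error) can follow this way; this is exactly why the introduction obtains only the weaker bound \eqref{derfis} along that route. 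The paper instead uses Lemma~\ref{lem:mure0} (Drmota--Ska{\l}ba, going back to Kurbatov): an irreducible polynomial of odd prime degree with $f(0)\neq\pm1$ and at least one nonzero intermediate coefficient has multiplicatively \emph{independent} roots, with no Galois-group hypothesis at all; the only exceptional polynomials are those with $f(0)=\pm1$ (counted by Lemma~\ref{lem:PnH}) and the $O(H)$ binomials $X^p+a_0$, which gives the error $O(H^{p-2})$. Your $p=2$ accounting is also wrong in detail: irreducible quadratics with constant term $\pm1$ number $4H+O(1)$, not $6H$, while the degenerate irreducible quadratics with $b\neq\pm1$ (essentially $X^2+b$) contribute $2H+O(H^{1/2})$, not $O(H^{1/2})$; your two miscounts happen to cancel.

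In (iii) you assume the decisive fact rather than prove it: that a non-degenerate irreducible quartic with $f(0)\neq\pm1$ and multiplicatively dependent roots satisfies a relation with \emph{absolutely bounded} exponents. Establishing this is the heart of the paper's proof: from $f(0)\neq\pm1$ one gets $k_1+k_2+k_3+k_4=0$; relations with exactly two or three nonzero exponents are excluded by non-degeneracy and by Lemma~\ref{lem:mure11}; and a Galois-conjugation argument shows that a relation with four nonzero exponents forces $(k_1,\ldots,k_4)=\pm(k,k,-k,-k)$ up to order, whence $\alpha_3\alpha_4=\zeta\alpha_1\alpha_2$ with $\zeta$ a root of unity of degree at most two over $\Q$, so one may take $k=12$. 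Only then can the machinery of (i) be rerun with the constant $K=12$ in place of $\lfloor c(\log H)^3\rfloor$ to get $O(H^3)$; without this reduction you only know $\max_i|k_i|\ll(\log H)^3$, and the logarithms do not ``collapse to a constant'' (the per-exponent-vector fibre bound you single out as the main obstacle is in fact the easy part). One smaller point in (i): for a fixed exponent vector, $\alpha_1^{k_1}\cdots\alpha_n^{k_n}=1$ is not by itself a polynomial equation in $a_0,\ldots,a_{n-1}$; it must be symmetrized in the roots (the paper does this globally via the discriminant of $F(X)=\prod_{\mathbf{k}}(X-\beta(\mathbf{k}))$), and one should also note that the resulting polynomial in the coefficients is not identically zero before applying the zero-counting lemma.
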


We even obtain an asymptotic formula for $\# R_n(H)$ for any $n \ge 2$ as $H \to \infty$.  
For this, we need to introduce some additional notation. 
For any $n\ge 2$, let $\nu_n$ be the volume of the symmetric convex body defined by 
$$
|x_i| \le 1, \, i=1,\ldots, n-1, \qquad \Big|\sum_{i=1}^{n-1} x_i\Big| \le 1. 
$$
Then, $\nu_2=2, \nu_3=3$ and $\nu_4=16/3$ (see, for instance, \cite[Section 5]{Du2014}). 

\begin{theorem}    \label{thm:reducible}
For $\# R_n(H)$, we have: 
\begin{itemize}
\item[(i)]   $\# R_2(H) = 4H + O(H^{1/2})$;

\item[(ii)]   $\# R_3(H) = 6 H^2 + O(H\log H \log\log H)$; 

\item[(iii)] for any $n\ge 4$, 
$$
\# R_n(H) = 2\nu_n H^{n-1} + O(H^{n-2}(\log H)^{2n^2-5n+2} ), 
$$
where the factor $(\log H)^{2n^2-5n+2}$ can be replaced by $\log H$ when $n-1$ is a prime or $n=5$. 
\end{itemize}
\end{theorem}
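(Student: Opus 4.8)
The plan is to separate $R_n(H)$ into the polynomials having $1$ or $-1$ as a root, which produce the main term, and everything else, which goes into the error term. Write $\#R_n(H)=\#A_n(H)+\#B_n(H)$, where $A_n(H)\subseteq R_n(H)$ consists of the $f$ with $f(1)=0$ or $f(-1)=0$ and $B_n(H)$ is the complement; note that every monic integer polynomial of degree $n\ge2$ and height at most $H$ with $\pm1$ as a root indeed lies in $R_n(H)$, being reducible and having the root of unity $\pm1$ among its roots. The relation $f(1)=0$ says the non-leading coefficients sum to $-1$, so $\#\{f:f(1)=0\}$ is the number of integer points in the $(n-1)$-dimensional region $\{|x_i|\le H\ (1\le i\le n-1),\ |x_1+\dots+x_{n-1}+1|\le H\}$, which is $\nu_n H^{n-1}+O(H^{n-2})$; the same holds for $f(-1)=0$ (negate every other coordinate), and the overlap is $O(H^{n-2})$. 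Hence $\#A_n(H)=2\nu_n H^{n-1}+O(H^{n-2})$, and everything reduces to showing $\#B_n(H)=O(H^{n-2}(\log H)^{2n^2-5n+2})$ for $n\ge4$, together with the sharper bounds for $n=2,3$.

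For $n\ge4$ I would argue by induction on $n$, using throughout the elementary fact that the height of an integer polynomial is comparable, up to constants depending only on its degree, to its Mahler measure; consequently $\mathrm{height}(gh)\asymp\mathrm{height}(g)\,\mathrm{height}(h)$ for a factorization $f=gh$, so peeling off a factor of large height forces the cofactor to have proportionally smaller height. Let $f\in B_n(H)$. If $f(0)=0$ then $f=Xf_1$ with $f_1\in M_{n-1}(H)$ (the non-zero roots of $f$ are exactly those of $f_1$), contributing $\ll\#M_{n-1}(H)=\#I_{n-1}(H)+\#R_{n-1}(H)\ll H^{n-2}(\log H)^{2n^2-5n+2}$ by Theorem~\ref{thm:irre}(i) and the inductive bound on $\#R_{n-1}(H)$. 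Now suppose $f(0)\ne0$, so all roots are non-zero; write $f=g_1^{e_1}\cdots g_s^{e_s}$ with distinct monic irreducible factors $g_i$, and fix a multiplicative relation among the distinct roots. Multiplying this relation over the automorphisms of the splitting field converts it into a multiplicative relation among the rational integers $g_i(0)$, each weighted by the sum of the exponents over the corresponding Galois orbit. Chasing when that integer relation is trivial leads to the following cases, each of which I would bound by $O(H^{n-2}(\log H)^{2n^2-5n+2})$, usually with a smaller power of $\log H$: \emph{(a)} some $g_i$ of degree $\ge2$ has $|g_i(0)|=1$; peeling it off and using that the number of monic polynomials of a given degree, fixed constant term and height $\le T$ is $\ll T^{\deg-1}$ gives $\ll H^{n-2}\log H$. \emph{(b)} Two ``integral features'' of $f$ (a rational integer root of absolute value $\ge2$, or a factor $g_i$ of degree $\ge2$ with $|g_i(0)|\ge2$) are multiplicatively related; since multiplicatively related pairs of integers are sparse and the factors can be peeled off with a height saving, this is $\ll H^{n-2}$. \emph{(c)} The relation is supported on a single Galois orbit, i.e.\ one irreducible factor $g$ of degree $d\in[2,n-1]$ is generalized degenerate; summing $\#I_d(\ll H/\mathrm{height}(f/g))$ over the possible cofactors, with $\#I_d(T)\ll T^{d-1}(\log T)^{2d^2-d-1}$, gives $\ll H^{n-2}(\log H)^{2n^2-5n+2}$, the extremal case being $d=n-1$. \emph{(d)} The relation is entangled across $\ge2$ irreducible factors of degree $\ge2$ and is balanced on each orbit (the orbit exponent sums vanish); then the product $P$ of the participating factors is itself generalized degenerate, and if $\deg P<n$ we reduce as in (c) to ``$P$ of degree $<n$, height $\ll H$, times a free cofactor'', again $\ll H^{n-2}(\log H)^{2n^2-5n+2}$.

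The remaining possibility in (d), where $\deg P=n$, is the main obstacle: then $f=g_1\cdots g_s$ is squarefree with every factor of degree $\ge2$ and every orbit genuinely participating, so no single factor is forced to be degenerate and the known counts for irreducible generalized degenerate polynomials do not directly apply. Here I would fix the smallest factor $g_1$ (of degree $d_1\le n/2$): if the relation restricted to the remaining factors is already non-trivial, then $f/g_1$ is generalized degenerate of degree $n-d_1$ and height $\ll H$, and we finish as before; otherwise the roots of $g_1$ participate, which forces the multiplicative group generated by the roots of $f/g_1$ to meet, in positive rank, the one generated by the roots of $g_1$, hence a non-torsion element expressible multiplicatively in two ways. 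A unit-equation / subspace-type argument then shows that for each $g_1$ only a bounded power of $\log H$ many cofactors $f/g_1$ of height $\ll H/\mathrm{height}(g_1)$ are compatible, and summing over $g_1$ keeps the total $\ll H^{n-2}(\log H)^{2n^2-5n+2}$. (When $n-1$ is prime, or $n=5$, the quantity $\#M_{n-1}$ carries no logarithm, which is exactly why the exponent of $\log H$ in the error drops to $1$ in those cases.)

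Finally, for $n=2,3$ the factorization $f=(X-a)g$ with $g$ linear or an irreducible quadratic can be analyzed completely. When $n=2$, the non-trivial elements of $B_2(H)$ are the products $(X-a)(X-b)$ of two multiplicatively related non-zero integers with $|ab|\le H$; these number $O(\sqrt H)$ (the bound attained by $X^2-c^2$ with $2\le c\le\sqrt H$), so $\#R_2(H)=4H+O(\sqrt H)$. When $n=3$, besides an $O(H)$ contribution from $X\cdot g$ with $g\in M_2(H)$ and an $O(H)$ contribution from triples of integer roots two of which are multiplicatively related, the dominant part of $B_3(H)$ comes from $f=(X-a)g$ with $g=X^2+bX+c$ irreducible, $|a|\ge2$, and $|c|$ multiplicatively related to $|a|$ — precisely the case where a relation $a^{k_0}(\beta\beta')^{k_1}=1$ exists among the roots, $\beta,\beta'$ being the roots of $g$. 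Summing the number of admissible $b$, which is $\ll H/|a|$, over such $a$, with the admissible moduli of $c$ giving a factor of order $\log_{|a|}H$ and using that a sum of the shape $\sum_{m\le H}1/(m\log m)$ is $\asymp\log\log H$, yields $O(H\log H\log\log H)$, whence $\#R_3(H)=6H^2+O(H\log H\log\log H)$.
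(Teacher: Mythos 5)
Your main term (counting $f$ with $f(\pm1)=0$ as lattice points in the convex body defining $\nu_n$) agrees with the paper, and several of your error estimates are in the right spirit, but two steps do not hold up. The first is that you repeatedly replace ``the integers involved are multiplicatively dependent'' by ``two of them are multiplicatively dependent'': in case (b) and in your $n=3$ analysis (``triples of integer roots two of which are multiplicatively related''). This reduction is false: $2\cdot 3\cdot 6^{-1}=1$, yet no two of $2,3,6$ are multiplicatively dependent. For $n\ge 4$ the missing configurations happen to force $f$ to have at least three irreducible factors, hence a (not necessarily irreducible) factor of degree $k\in[2,n/2]$, and the number of \emph{all} such polynomials is $O(H^{n-2}\log H)$ by height multiplicativity (the paper's Lemma~\ref{lem:reducible}) --- but you never make that argument. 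For $n=3$ it cannot be repaired this way: totally split cubics of height at most $H$ are far more numerous than $H\log H\log\log H$, so the dependence itself must be used. The paper handles this by noting that a relation with all three exponents nonzero forces every prime dividing $b_0b_1b_2$ to divide it at least twice, so the product is squarefull, and then a divisor-moment bound (Wilson's estimate \eqref{eq:PnPn}) gives $O(H^{1/2}(\log H)^{511})$. As written, your proof of part (ii) is therefore incomplete.

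The second gap is your treatment of case (d) with $\deg P=n$: the claim that a ``unit-equation / subspace-type argument'' limits, for each smallest factor $g_1$, the compatible cofactors to a bounded power of $\log H$ is asserted, not proved, and it is not clear it is true in the form you need. It is also unnecessary: in that case every irreducible factor of $f$ has degree at least $2$, so $f$ has a factor of degree $k\in[2,\lfloor n/2\rfloor]$, and the total number of monic integer polynomials of degree $n$ and height at most $H$ with such a factor --- generalized degenerate or not --- is $O(H^{n-2}\log H)$, again by Lemma~\ref{lem:reducible}. This is exactly how the paper proceeds: it first discards, via Lemma~\ref{lem:reducible} and Lemma~\ref{lem:QnH}, everything except $f=(X+b)g$ with $g$ irreducible of degree $n-1$ and $b\ne \pm1$, and then only a two-object alternative remains (either the roots of $g$ are multiplicatively dependent, or $b$ and $g(0)$ are, via Lemma~\ref{lem:multQ}); your cases (b) and (d), and the induction on $n$, are thereby avoided altogether. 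With these repairs your architecture would work, but in its present form the argument has genuine holes in parts (ii) and (iii).
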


Combining Theorem \ref{thm:irre} with Theorem \ref{thm:reducible}, we immediately obtain 
the following estimates on the size of the set $M_n(H)$.  

\begin{theorem}\label{thm:monic}
For $\# M_n(H)$, we have: 
\begin{itemize}
\item[(i)] for any integer $n\ge 2$, 
$$
H^{n-1} \ll \# M_n(H)  \ll H^{n-1} (\log H)^{2n^2-n-1}; 
$$

\item[(ii)]   $\# M_2(H) = 10H + O(H^{1/2})$; 

\item[(iii)]   $\# M_3(H) = 14 H^2 + O(H\log H \log\log H)$; 

\item[(iv)] $\# M_4(H) \asymp H^3$; 

\item[(v)] $\# M_5(H) = (2\nu_5+32) H^4 + O(H^3\log H)$; 

\item[(vi)]  for any prime $p > 5$, 
$$
\# M_p(H) = (2\nu_p+2^p) H^{p-1} + O(H^{p-2}(\log H)^{2p^2-5p+2} ).
$$
\end{itemize}
\end{theorem}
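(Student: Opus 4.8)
The plan is to deduce the theorem formally from the decomposition $\# M_n(H) = \# I_n(H) + \# R_n(H)$ recorded above, feeding in Theorems~\ref{thm:irre} and~\ref{thm:reducible}; no new estimate is needed. For part~(i), I would first note that Theorem~\ref{thm:reducible} yields $\# R_n(H) \ll H^{n-1}$ in every case: this is immediate from $\# R_2(H) = 4H + O(H^{1/2})$, from $\# R_3(H) = 6H^2 + O(H\log H\log\log H)$, and from $\# R_n(H) = 2\nu_n H^{n-1} + O(H^{n-2}(\log H)^{2n^2-5n+2})$ for $n \ge 4$ (as a fixed power of $\log H$ is absorbed by one power of $H$). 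Combining this with the upper bound $\# I_n(H) \ll H^{n-1}(\log H)^{2n^2-n-1}$ of Theorem~\ref{thm:irre}(i) — using that $2n^2-n-1 \ge 0$ for $n \ge 2$ so that this power of $\log H$ dominates the contribution of $\# R_n(H)$ — together with the lower bound $\# M_n(H) > 2^n H^{n-1}$ from~\eqref{jons20}, gives part~(i).

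For the explicit statements in parts~(ii)--(vi) I would simply add the relevant asymptotic expansions term by term. For $n = 2$: Theorem~\ref{thm:irre}(ii) gives $\# I_2(H) = 6H + O(H^{1/2})$ and Theorem~\ref{thm:reducible}(i) gives $\# R_2(H) = 4H + O(H^{1/2})$, whence~(ii). For $n = 3$: Theorem~\ref{thm:irre}(ii) with $p = 3$ gives $\# I_3(H) = 8H^2 + O(H)$ and Theorem~\ref{thm:reducible}(ii) gives $\# R_3(H) = 6H^2 + O(H\log H\log\log H)$, whence~(iii) with the larger error term retained. For $n = 4$: Theorem~\ref{thm:irre}(iii) gives $\# I_4(H) \asymp H^3$ and Theorem~\ref{thm:reducible}(iii) gives $\# R_4(H) \ll H^3$, whence $\# M_4(H) \asymp H^3$, which is~(iv).

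For the remaining prime cases I would use that the error term in Theorem~\ref{thm:reducible}(iii) takes the shape $O(H^{n-2}\log H)$ precisely when $n = 5$ or $n-1$ is prime. For $n = 5$: Theorem~\ref{thm:irre}(ii) with $p = 5$ gives $\# I_5(H) = 32H^4 + O(H^3)$, and since $n = 5$ the improved error in Theorem~\ref{thm:reducible}(iii) applies, giving $\# R_5(H) = 2\nu_5 H^4 + O(H^3\log H)$, whence~(v). For a prime $p > 5$: Theorem~\ref{thm:irre}(ii) gives $\# I_p(H) = 2^p H^{p-1} + O(H^{p-2})$ and Theorem~\ref{thm:reducible}(iii) with $n = p$ gives $\# R_p(H) = 2\nu_p H^{p-1} + O(H^{p-2}(\log H)^{2p^2-5p+2})$, whence~(vi).

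The only point needing a little attention is the bookkeeping of the error terms and the leading constants — checking in each case that the dominant error is the one recorded, and that the constants ($6$ and $4$ for $n=2$; $8$ and $6$ for $n=3$; $2^p$ and $2\nu_p$ for prime $p$) add up as claimed — but there is no genuine obstacle, the result being a direct consequence of Theorems~\ref{thm:irre} and~\ref{thm:reducible}.
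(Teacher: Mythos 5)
Your proposal is correct and follows exactly the paper's route: the paper derives Theorem~\ref{thm:monic} immediately from the decomposition $\# M_n(H)=\# I_n(H)+\# R_n(H)$ together with Theorems~\ref{thm:irre} and~\ref{thm:reducible}, which is precisely the term-by-term bookkeeping you carry out (including using the improved $\log H$ error for $n=5$ and the general error exponent for primes $p>5$).
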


 It seems very likely that the logarithmic factor
in Theorem \ref{thm:monic} (i) can be removed, since it is natural to expect that the growth rate $H^{n-1}$ is true not only for $n=4$ and prime, but also for each $n \ge 2$. 

\begin{conjecture}   \label{conj:monic}
For any integer $n\ge 2$, we have 
$$
 \# M_n(H) \asymp H^{n-1}. 
$$
\end{conjecture}

\subsection{The non-monic case}

Let $M_n^*(H)$ be the set of generalized degenerate integer polynomials (not necessarily monic)
of degree $n$ and height at most $H$. Obviously, we have 
$$
\# M_n^*(H) \geq 4H(2H+1)^{n-1} > 2^{n+1} H^n,
$$
since each integer polynomial
of degree $n$ and height at most $H$ with modulus of the constant coefficient equal to the modulus of the leading coefficient belongs to $M_n^*(H)$. 

As in the monic case, let  $I_n^*(H)$ (resp. $R_n^*(H)$) be the set of gene\-ra\-lized degenerate irreducible (resp. reducible) integer polynomials
of degree $n$ and height at most $H$. 
Clearly, 
$$
\# M_n^*(H) = \# I_n^*(H) + \# R_n^*(H). 
$$  
As before, we first estimate $\# I_n^*(H)$ and $\# R_n^*(H)$ separately. 

\begin{theorem}    \label{thm:irre2}
For $\# I_n^*(H)$, we have:
\begin{itemize}
\item[(i)] for any integer $n\ge 2$, 
$$
H^{n} \ll \# I_n^*(H)  \ll H^n (\log H)^{2n^2-n-1}; 
$$

\item[(ii)] for any odd prime $p$, 
$$
\# I_p^*(H) = 2^{p+1} H^p + O(H^{p-1}), 
$$
and 
$$
\# I_2^*(H) = 12 H^2 + O(H\log H);
$$

\item[(iii)]  $\# I_4^*(H)  \asymp H^4$. 
\end{itemize}
\end{theorem}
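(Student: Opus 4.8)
The plan is to mirror the strategy already used in the monic case (Theorem~\ref{thm:irre}) and adapt it to non-monic polynomials, paying attention to the extra factor of $H$ coming from the free leading coefficient. Write $f(X)=a_n X^n + \dots + a_0$ with $1 \le |a_n| \le H$. For the \textbf{lower bound} $H^n \ll \#I_n^*(H)$ in (i), I would exhibit an explicit family: take $f$ irreducible with $|a_0|=|a_n|$, so that the product of all roots is $\pm 1$ and hence the (distinct, non-zero) roots are multiplicatively dependent; a positive proportion of the $\asymp H^n$ such polynomials are irreducible (this is where one invokes a Chela-type or van der Waerden-type count, exactly as in the monic case), giving the claimed bound.

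For the \textbf{upper bound} in (i), the scheme is: a generalized degenerate irreducible $f$ has multiplicatively dependent conjugates, and by Lemma~\ref{lem:mure} together with the fact that $S_n$ and $A_n$ are the only $2$-transitive groups one needs to worry about when $\#S_n/G < n$, one splits according to the Galois group of the splitting field. If the group is $2$-transitive then either the product of the conjugates is $\pm1$ (an $O(H^n)$ condition on the coefficients, after dividing out the leading coefficient $a_n$, for which there are $\ll H$ choices) or $\alpha_1^N = \dots = \alpha_n^N$, which forces $f$ to be degenerate; the count of non-monic degenerate irreducible polynomials is handled by \cite[Theorem~2]{dusa1} (or its non-monic analogue) and is of smaller order. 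If the Galois group is not $2$-transitive, the resolvent/discriminant-type bounds of Dietmann \cite{Dietmann}, in the non-monic form, bound the number of such $f$ by $H^{n+\delta(n)+\varepsilon}$ with $\delta(n)\le 1/n$, and then the extra genuine arithmetic input — the multiplicative dependence of the roots — cuts this down to $H^n(\log H)^{2n^2-n-1}$ by the same counting lemma on multiplicatively dependent $n$-tuples of algebraic numbers of bounded height that is used to prove Theorem~\ref{thm:irre}(i) (the exponent $2n^2-n-1$ is exactly what comes out there). The passage from monic to non-monic costs one power of $H$ uniformly, which is why every exponent in Theorem~\ref{thm:irre2} is that of Theorem~\ref{thm:irre} with $n-1$ replaced by $n$.

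For part (ii), the prime case, I would again follow the monic proof of Theorem~\ref{thm:irre}(ii): when $n=p$ is an odd prime, the only way a set of $p$ multiplicatively dependent conjugates can occur for an irreducible $f$ with $|a_0| \ne |a_p|$ is essentially impossible up to lower-order terms, because the relevant Galois group of an irreducible degree-$p$ polynomial is either $2$-transitive (handled by Lemma~\ref{lem:mure}, forcing $|a_0|=|a_p|$ up to the negligible degenerate contribution, since a degree-$p$ polynomial cannot be non-trivially degenerate) or lies in a metacyclic group whose index in $S_p$ is $\ge p$, giving a contribution of size $O(H^{p-1})$ via Dietmann's non-monic bounds. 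Hence $\#I_p^*(H)$ equals the number of irreducible $f$ of degree $p$, height $\le H$, with $|a_0|=|a_p|$, which is $2^{p+1}H^p + O(H^{p-1})$: there are $2$ sign choices, $\asymp H^p$ choices for the remaining coefficients, two choices $a_0 = \pm a_p$, and the reducible ones are $O(H^{p-1})$. The $n=2$ case is a direct elementary count of irreducible quadratics $aX^2+bX+c$ with $|a|=|c|$ and $b^2-4ac$ not a perfect square, giving $12H^2 + O(H\log H)$ after excluding squares (the $H\log H$ error absorbs the count of $(a,b,c)$ with $b^2-4ac=\square$).

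For part (iii), the case $n=4$, the lower bound $H^4 \ll \#I_4^*(H)$ is again the $|a_0|=|a_4|$ family, while the matching upper bound $\#I_4^*(H) \ll H^4$ is the non-monic analogue of Theorem~\ref{thm:irre}(iii): here $\delta(4)=1/4$ so Dietmann gives only $H^{4+1/4+\varepsilon}$ for the non-$2$-transitive part, and one must invoke the stronger structure available in degree $4$ — the fact that a quartic whose roots are multiplicatively dependent but whose Galois group is not $2$-transitive (so contained in $D_4$, $C_4$, or $V_4$) satisfies a genuine polynomial relation among the coefficients defining a variety of dimension $\le 4$, combined with the multiplicative-dependence constraint — to bring the count down to $O(H^4)$. \emph{The main obstacle is precisely this step:} making the degree-$4$ argument work uniformly in the non-monic setting, since the leading coefficient interacts with both the resolvent cubic and the multiplicative relation, and one cannot simply normalize it away without losing the integrality that the counting arguments rely on. I expect this to require a careful case analysis over the transitive subgroups of $S_4$ together with an explicit parametrization of quartics with a fixed small multiplicative relation among their roots, analogous to but more delicate than the monic computation behind Theorem~\ref{thm:irre}(iii).
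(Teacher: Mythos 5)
Your lower bounds and the shape of the main terms are fine, but the decisive steps in (ii) and (iii) do not work as proposed, and in both cases the paper's mechanism is different from yours. For (ii) with $p$ an odd prime, you dispose of the non-$2$-transitive Galois groups by "Dietmann's non-monic bounds, giving $O(H^{p-1})$"; Dietmann's results (cited in the paper only for the monic case) always lose a positive power of $H$ plus $\varepsilon$ -- that is exactly the $\delta(n)+\varepsilon$ in \eqref{derfis} -- so they cannot yield an error term $O(H^{p-1})$, and the generic multiplicative-dependence count cannot rescue this either, since it only gives $H^{p}(\log H)^{2p^2-p-1}$, which exceeds the main term. The paper needs no Galois-theoretic splitting at all here: it applies the Kurbatov--Drmota--Ska{\l}ba result (Lemma~\ref{lem:mure0}) to $f/a_p$, so that any irreducible $f$ with $a_0\ne\pm a_p$ and at least one nonzero middle coefficient has multiplicatively \emph{independent} roots, and the exceptional binomials $a_pX^p+a_0$ number only $O(H^2)=O(H^{p-1})$; together with Lemma~\ref{lem:PnH*} this gives $2^{p+1}H^p+O(H^{p-1})$ directly. (Your parenthetical claim that a degree-$p$ polynomial cannot be nontrivially degenerate is false -- consider $a(X^p-2)$ -- although such polynomials are indeed negligible in number.) For $n=2$ your count is also off: irreducible quadratics with $|a_0|=|a_2|$ contribute only $8H^2+O(H\log H)$, not $12H^2$; the missing $4H^2$ comes from irreducible binomials $a_2X^2+a_0$, whose roots have ratio $-1$. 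Moreover you give no converse classification: the paper shows that an irreducible generalized degenerate quadratic with $a_0\ne\pm a_2$ must be degenerate with root ratio among $-1,\pm\sqrt{-1},(\pm1\pm\sqrt{-3})/2$, and that all cases other than $-1$ contribute only $O(H\log H)$; without this your upper bound for $n=2$ is unproven.

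For (iii) you explicitly leave the key step open, and the route you sketch (case analysis over $D_4$, $C_4$, $V_4$ with Dietmann-type counting) again cannot produce a clean $O(H^4)$ with the cited tools, for the same reason as above. The paper's argument avoids Galois-group counting entirely: for an irreducible non-degenerate quartic with $a_0\ne\pm a_4$, taking norms forces $k_1+k_2+k_3+k_4=0$, Dixon's result (Lemma~\ref{lem:mure11}) rules out relations with three nonzero exponents or with one exponent of modulus equal to the sum of the others, and a short conjugation/symmetry argument then forces $k_1=k_2=-k_3=-k_4$, so that $\alpha_3\alpha_4=\zeta\alpha_1\alpha_2$ with $\zeta$ a root of unity of degree at most two, hence $\alpha_1^{12}\alpha_2^{12}\alpha_3^{-12}\alpha_4^{-12}=1$; one then reruns the auxiliary-polynomial/discriminant count with the absolute constant $K=12$ in place of $K\asymp(\log H)^3$, which gives $\ll H^4$. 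Finally, on (i): the paper's upper bound likewise uses neither Lemma~\ref{lem:mure} nor Dietmann (those only give the weaker bound \eqref{derfis} from the introduction). It passes to $g=f/a_n$, bounds the exponents of a dependence relation by Loxton--van der Poorten, forms the discriminant of $\prod(X-\beta(\mathbf{k}))$, multiplies by a suitable power of $a_n$ to clear denominators so as to obtain an integer polynomial of degree $O((\log H)^{2n^2-n-1})$ in the $n+1$ coefficients, and applies Lemma~\ref{polyno}; your "same counting lemma" reference points at this, but the $2$-transitivity/Dietmann layer you wrap around it is both unnecessary and, as in (ii), incapable of delivering the stated exponents on its own, and you omit the denominator-clearing step that is the only genuinely non-monic ingredient.
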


We also get an asymptotic formula for $\# R_n^*(H)$ for any $n \ge 2$ as $H \to \infty$.  

\begin{theorem}    \label{thm:reducible2}
For $\# R_n^*(H)$, we have: 
\begin{itemize}
\item[(i)]   $\# R_2^*(H) = 6H^2 + O(H\log H)$;

\item[(ii)] $\# R_3^*(H) = 2\nu_4 H^3 + O(H^2(\log H)^3)$; 

\item[(iii)] for any $n\ge 4$, 
$$
\# R_n^*(H) = 2\nu_{n+1} H^n + O(H^{n-1}(\log H)^{2n^2-5n+2} ), 
$$
where the factor $(\log H)^{2n^2-5n+2}$ can be replaced by $\log H$ when $n-1$ is a prime or $n=5$. 
\end{itemize}
\end{theorem}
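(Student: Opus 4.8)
The plan is to run the argument in exact parallel with the monic reducible case of Theorem~\ref{thm:reducible}: single out the dominant family of integer polynomials of degree $n$ and height at most $H$ that are divisible by $X-1$ or by $X+1$, and then show that all other members of $R_n^*(H)$ are of strictly smaller order. Any polynomial of degree $n\ge2$ divisible by $X-\varepsilon$ with $\varepsilon\in\{1,-1\}$ is reducible and, since $\pm1$ is a root of unity, is a generalized degenerate polynomial; these are exactly the $f$ with $f(1)f(-1)=0$, and they all lie in $R_n^*(H)$.

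To count them, fix $\varepsilon\in\{1,-1\}$ and write $f(X)=(X-\varepsilon)g(X)$, where $g(X)=\sum_{j=0}^{n-1}b_jX^j\in\Z[X]$ (division by the monic $X-\varepsilon$ preserves integrality). Comparing coefficients, the condition that $f$ has height at most $H$ becomes $|b_{n-1}|\le H$, $|b_j-\varepsilon b_{j+1}|\le H$ for $0\le j\le n-2$, and $|b_0|\le H$. A unimodular change of variables turns the set of $(b_0,\dots,b_{n-1})$ satisfying these $n+1$ linear inequalities into the set of $(v_0,\dots,v_{n-1})\in\Z^n$ with $|v_i|\le H$ for $0\le i\le n-1$ and $|v_0+\cdots+v_{n-1}|\le H$, which is the $H$-dilate of the convex body of volume $\nu_{n+1}$. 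Hence, apart from the $O(H^{n-1})$ polynomials with $b_{n-1}=0$ (excluded since $\deg f=n$), a standard lattice-point count gives $\nu_{n+1}H^n+O(H^{n-1})$ such $f$ for each $\varepsilon$. Summing over $\varepsilon=\pm1$ and removing, by inclusion--exclusion, the $O(H^{n-1})$ polynomials divisible by $X^2-1$, we get $2\nu_{n+1}H^n+O(H^{n-1})$ polynomials in $R_n^*(H)$ with $f(1)f(-1)=0$; in particular this produces the leading constants $2\nu_3=6$ and $2\nu_4$ of parts (i) and (ii). It remains to bound the set $N(H):=\#\{f\in R_n^*(H):f(1)f(-1)\ne0\}$ by the stated error term.

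I would bound $N(H)$ by splitting according to the factorisation of $f$. If $X\mid f$, then $f=X\widetilde f$ with $\widetilde f$ of degree $n-1$, height at most $H$, and generalized degenerate, so this part is at most $\#M_{n-1}^*(H)$, which by Theorems~\ref{thm:irre2} and~\ref{thm:reducible2} in degree $n-1$ is $\ll H^{n-1}(\log H)^{2(n-1)^2-(n-1)-1}=H^{n-1}(\log H)^{2n^2-5n+2}$. If $f$ is reducible with no linear factor, say $f=gh$ with $2\le\deg g\le\deg h$, then Gelfond's inequality makes the product of the heights of $g$ and $h$ be $O(H)$, and a standard count of such factorisations (in the spirit of \cite{Chela}), sharpened by generalized degeneracy, keeps this within the same bound. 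The remaining and dominant case is $f=(aX-b)g$ with $\gcd(a,b)=1$, $b\ne0$, $\max(|a|,|b|)\ge2$ and $\deg g=n-1$; here Gelfond's inequality forces the height of $g$ to be $\ll H/\max(|a|,|b|)$. If the distinct non-zero roots of $g$ are already multiplicatively dependent, summing $\#M_{n-1}^*(H/m)$ over the $O(m)$ linear factors $aX-b$ of height $\asymp m$ gives the desired bound, since $\sum_{m\ge2}m^{2-n}$ converges for $n\ge4$ (losing one more factor $\log H$ when $n=4$). Otherwise $b/a$ occurs with a non-zero exponent in the multiplicative relation among the roots of $f$; as $b/a\ne\pm1$ is not a root of unity, symmetrising this relation over the Galois group of the splitting field of $g$ (breaking the roots of $g$ into Galois orbits if $g$ is reducible) forces the rational number $g(0)/(\text{leading coefficient of }g)$, i.e.\ $\pm$ the product of the roots of $g$, to be multiplicatively dependent with $b/a$. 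Since a fixed rational $\ne\pm1$ is multiplicatively dependent with only $O(\log H)$ rationals of height at most $H$ (a geometrically sparse set), the leading and constant coefficients of $g$ are pinned to a thin set, and summing over the free middle coefficients of $g$ and over $a,b$ again gives $O(H^{n-1}(\log H)^{2n^2-5n+2})$. When $n-1$ is prime or $n=5$, the degree $n-1$ inputs of Theorems~\ref{thm:irre}--\ref{thm:irre2} carry no large power of $\log H$, which is exactly what lets the exponent collapse to $1$.

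For $n=2$ and $n=3$ the same decomposition applies, but the exceptional polynomials are just products of at most three linear factors (or a linear factor and an irreducible quadratic) whose non-zero roots satisfy a multiplicative relation, and elementary counting of multiplicatively dependent pairs and triples of bounded-height rationals --- where divisor-type sums produce the powers of $\log H$ --- gives the errors $O(H\log H)$ and $O(H^2(\log H)^3)$ of parts (i) and (ii). The main obstacle is the dominant case above: without the generalized-degeneracy hypothesis the family $f=(aX-b)g$ with a ``large'' rational root $b/a$ already contributes $\asymp H^n$, the same order as the main term, so the whole estimate hinges on showing the multiplicative relation forces a genuine saving. The Galois-symmetrisation step --- which transfers the relation from the conjugate roots of $g$ onto the single rational $g(0)/(\text{leading coefficient of }g)$ --- together with the quantitative sparseness of rationals multiplicatively dependent with a fixed one, is the crux; and pinning the logarithmic exponent down to $2n^2-5n+2=2(n-1)^2-(n-1)-1$ (and to $1$ in the special cases) requires threading the Dietmann-type bounds for Galois groups other than $S_n$ and $A_n$ carefully through the degree $n-1$ estimates.
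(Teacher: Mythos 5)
Your main term is handled exactly as in the paper (the unimodular change of variables reproving Lemma \ref{lem:QnH*}), and for an \emph{irreducible} cofactor $g$ your Galois--symmetrisation dichotomy is a correct substitute for the paper's use of Lemma \ref{lem:multQ}. The genuine gap is that your dominant case allows $g$ to be reducible, and there the symmetrisation claim is false: symmetrising a relation $(b/a)^{k_0}\prod_i\beta_i^{k_i}=1$ over the splitting field only produces a relation between $b/a$ and the norms of the \emph{separate} irreducible factors of $g$, not a pairwise dependence of $b/a$ with the single rational $g(0)/(\text{leading coefficient of }g)$. Concretely, take $f=(X-2)(X-3)(X^2-X+6)$: its roots $2,3,\beta,\overline{\beta}$ satisfy $2\cdot 3\cdot(\beta\overline{\beta})^{-1}=1$, so $f$ is generalized degenerate; yet the roots of the cofactor $g=(X-3)(X^2-X+6)$ are multiplicatively independent (the quadratic is non-degenerate), and $2$ is not multiplicatively dependent with $g(0)/\mathrm{lc}(g)=-18$ (nor is $3$ with $-12$ if you pull out the other linear factor). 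Such $f$ fall through both subcases of your final case and are not caught by your ``no linear factor'' case either. The paper avoids this by first reducing, via Lemma \ref{lem:reducible}, to $f=(b_1X+b_0)g$ with $g$ \emph{irreducible} of degree $n-1$, disposing of every other factorisation shape by crude counting (which suffices for $n\ge 4$, where such $f$ have a factor of degree between $2$ and $n-2$), and only then applying Lemma \ref{lem:multQ}. You need the same reduction.

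This matters most precisely where crude counting fails, namely parts (i) and (ii), which you essentially assert. For $n=2$ the family of products of two linear factors has order $H^2$, and the naive argument you gesture at (fix one rational root; its multiplicative partners form an $O(\log H)$-sparse set) only yields $O(H^2\log H)$, a full factor $H$ short of the claimed error $O(H\log H)$; the paper gets $O(H\log H)$ from the explicit parametrisation $\alpha_i=\pm b^{k},\pm b^{m}$ with $b=b_1/b_2$ and the divisibility constraints (e.g.\ $b_2^{k+m}\mid a_2$) it forces on the coefficients. Likewise, for $n=3$ the product-of-three-linear-factors case is handled in the paper by the observation that every prime dividing the product of all numerators and denominators must divide it at least twice, so that product is squarefull, combined with the divisor-moment bound \eqref{eq:PnPn}; this idea is absent from your sketch, and a naive ``pinned partner'' count does not reach $O(H^2(\log H)^3)$. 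Your treatment of the second subcase for $n\ge 4$ (ratio $c_0/c_{n-1}$ pinned to an $O(\log H)$-set, then summing over the remaining coefficients and the linear factor) and your use of the degree-$(n-1)$ bounds, with $2(n-1)^2-(n-1)-1=2n^2-5n+2$, are sound and in the spirit of the paper, but as written the reducible-cofactor case and parts (i)--(ii) are real gaps.
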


Combining Theorem \ref{thm:irre2} with Theorem \ref{thm:reducible2}, we obtain 
the following estimates on the size of the set $M_n^*(H)$.  

\begin{theorem}\label{thm:nonmonic}
For $\# M_n^*(H)$, we have: 
\begin{itemize}
\item[(i)] for any integer $n\ge 2$, 
$$
H^{n} \ll \# M_n^*(H)  \ll H^{n} (\log H)^{2n^2-n-1}; 
$$

\item[(ii)]   $\# M_2^*(H) = 18H^2 + O(H\log H)$; 

\item[(iii)] $\# M_3^*(H) = (2\nu_4+16) H^3 + O(H^2(\log H)^3)$; 

\item[(iv)]  $\# M_4^*(H)  \asymp H^4$; 

\item[(v)] $\# M_5^*(H) = (2\nu_6+64) H^5 + O(H^4\log H)$; 

\item[(vi)]   for any prime $p > 5$,  
$$
\# M_p^*(H) = (2\nu_{p+1}+2^{p+1}) H^p + O(H^{p-1}(\log H)^{2p^2-5p+2} ). 
$$
\end{itemize}
\end{theorem}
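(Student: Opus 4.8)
The plan is to derive Theorem~\ref{thm:nonmonic} by simply combining the two preceding theorems. Since a polynomial of degree $n\ge 2$ over $\Z$ cannot be at once irreducible and reducible, the sets $I_n^*(H)$ and $R_n^*(H)$ partition $M_n^*(H)$, and hence
$$
\# M_n^*(H) = \# I_n^*(H) + \# R_n^*(H).
$$
I would insert the estimates of Theorem~\ref{thm:irre2} and Theorem~\ref{thm:reducible2} into this identity and simplify, handling the small values $n=2,3,4,5$ and the prime values $p>5$ separately, which is exactly the way the statement is organised.

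For part (i), the lower bound is immediate from $\# M_n^*(H) \ge \# I_n^*(H) \gg H^n$ (Theorem~\ref{thm:irre2}(i)). For the upper bound I would use $\# I_n^*(H) \ll H^n(\log H)^{2n^2-n-1}$ from Theorem~\ref{thm:irre2}(i) together with $\# R_n^*(H) \ll H^n$, which holds for every $n\ge 2$ by Theorem~\ref{thm:reducible2} (its parts (i), (ii), (iii) according as $n=2$, $n=3$, $n\ge4$; in the case $n\ge 4$ the main term is $2\nu_{n+1}H^n$ and the error $O(H^{n-1}(\log H)^{2n^2-5n+2})$ is of smaller order since any power of $\log H$ is $\ll H^{\varepsilon}$). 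Adding the two bounds gives $\# M_n^*(H) \ll H^n(\log H)^{2n^2-n-1}$.

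For parts (ii)--(vi) I would substitute the asymptotic formulas directly. For $n=2$, Theorem~\ref{thm:irre2}(ii) and Theorem~\ref{thm:reducible2}(i) give $\# M_2^*(H) = 12H^2 + 6H^2 + O(H\log H) = 18H^2 + O(H\log H)$. For $n=3$, using $\#I_3^*(H)=2^{4}H^{3}+O(H^{2})$ from Theorem~\ref{thm:irre2}(ii) and Theorem~\ref{thm:reducible2}(ii), one gets $\# M_3^*(H) = (2\nu_4+16)H^3 + O(H^2(\log H)^3)$. For $n=4$, both $\# I_4^*(H)$ and $\# R_4^*(H)$ are $\asymp H^4$, so $\# M_4^*(H)\asymp H^4$. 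For $n=5$, combining $\#I_5^*(H)=2^{6}H^{5}+O(H^{4})$ with Theorem~\ref{thm:reducible2}(iii), whose logarithmic factor may be taken equal to $\log H$ because $n=5$, gives $\# M_5^*(H) = (2\nu_6+64)H^5 + O(H^4\log H)$. Finally, for a prime $p>5$, Theorem~\ref{thm:irre2}(ii) gives $\#I_p^*(H)=2^{p+1}H^p+O(H^{p-1})$ and Theorem~\ref{thm:reducible2}(iii) gives $\#R_p^*(H)=2\nu_{p+1}H^p+O(H^{p-1}(\log H)^{2p^2-5p+2})$ --- here $p-1$ is even and at least $6$, hence not prime, and $p\ne5$, so the single-logarithm improvement does not apply --- and summing yields $\# M_p^*(H) = (2\nu_{p+1}+2^{p+1})H^p + O(H^{p-1}(\log H)^{2p^2-5p+2})$.

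I do not expect any genuine obstacle in this argument: all of the substance lies in Theorems~\ref{thm:irre2} and~\ref{thm:reducible2}. The only things needing a little care are the bookkeeping of error terms --- in each case one of the two summands carries a strictly larger error, so the errors just add without interaction --- and tracking when the logarithmic exponent $2n^2-5n+2$ of Theorem~\ref{thm:reducible2}(iii) collapses to $1$, which happens precisely when $n=5$ or $n-1$ is prime.
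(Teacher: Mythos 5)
Your proposal is correct and is exactly the paper's argument: the paper obtains Theorem~\ref{thm:nonmonic} by adding the estimates of Theorems~\ref{thm:irre2} and~\ref{thm:reducible2} via the identity $\# M_n^*(H) = \# I_n^*(H) + \# R_n^*(H)$, and your case-by-case bookkeeping (including when the exponent $2n^2-5n+2$ drops to $1$) matches the intended derivation.
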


We also conjecture that the logarithmic factor
in Theorem \ref{thm:nonmonic} (i) can be removed. 

\begin{conjecture}   \label{conj:nonmonic}
For any integer $n\ge 2$, we have 
$$
\# M_n^*(H) \asymp H^{n}. 
$$
\end{conjecture}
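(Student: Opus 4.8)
The lower bound $\#M_n^*(H)\gg H^n$ is already established, and Theorem~\ref{thm:reducible2} gives $\#R_n^*(H)=2\nu_{n+1}H^n+O(H^{n-1}(\log H)^{2n^2-5n+2})$, so in particular $\#R_n^*(H)\asymp H^n$. Since $\#M_n^*(H)=\#I_n^*(H)+\#R_n^*(H)$, Conjecture~\ref{conj:nonmonic} is equivalent to the single upper bound
$$
\#I_n^*(H)\ \ll\ H^n,
$$
i.e.\ to removing the logarithmic factor from Theorem~\ref{thm:irre2}(i). The plan is to prove this bound. First I would reduce to primitive polynomials exactly as in the proof of Theorem~\ref{thm:irre2}: writing $f=c\,g$ with $g$ primitive irreducible and $1\le |c|\le H$, the polynomials $f$ and $g$ have the same roots and the same splitting field, $g$ has height at most $H/|c|$, and $\sum_{c\le H}(H/|c|)^n\asymp H^n$ for $n\ge 2$; thus it suffices to bound by $O(H^n)$ the number of \emph{primitive} irreducible generalized degenerate $g\in\Z[X]$ of degree $n$ and height at most $H$.

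Next I would split this set according to the Galois group $G$ of the splitting field of $g$ over $\Q$. If $G$ is $2$-transitive (this covers $G=S_n$ and $G=A_n$ for $n\ge 4$), then Lemma~\ref{lem:mure} applies to a root $\al$ of $g$: either the product of the conjugates of $\al$ is $\pm1$, which forces the constant and leading coefficients of $g$ to have equal absolute value — and there are only $O(H^n)$ such $g$ — or else $g$ is degenerate. The number of degenerate irreducible integer polynomials of degree $n$ and height at most $H$ is, by the methods of \cite{dusa1} underlying \eqref{poij2}, of order $H^{n/p+1}$ with $p$ the least prime divisor of $n$, and $n/p+1<n$ for every $n\ge 3$; the remaining case $n=2$ is already settled in Theorem~\ref{thm:irre2}(ii). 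Hence the $2$-transitive contribution is $O(H^n)$.

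The remaining, and genuinely hard, case is that of $g$ whose Galois group $G$ is \emph{not} $2$-transitive, so that $G\ne S_n,A_n$ and $\#(S_n/G)\ge n$. Since the generalized degenerate condition only shrinks the set, here it would be enough to have the clean count
$$
\#\bigl\{\,g\in\Z[X]\text{ primitive}:\ \deg g=n,\ \h(g)\le H,\ \gal(g)\ne S_n,A_n\,\bigr\}\ \ll\ H^n .
$$
This is exactly where the present argument loses a power of $\log H$: Theorem~\ref{thm:irre2} relies on quantitative results of Dietmann type carrying a factor $H^{\varepsilon}$ that the paper has only managed to replace by $(\log H)^{2n^2-n-1}$. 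A log-free bound of the displayed shape would follow from sufficiently sharp forms of results on Galois groups of random integer polynomials (of van der Waerden--Bhargava type, in the non-monic normalisation), together with the fact that polynomials with $\gal(g)=A_n$ — those with square discriminant — are themselves rare. I regard establishing, or locating in the literature, such a log-free count of polynomials with non-$2$-transitive Galois group as the main obstacle, and it is the only ingredient missing from a complete proof. (An identical reduction, using Theorem~\ref{thm:reducible} in place of Theorem~\ref{thm:reducible2}, shows that Conjecture~\ref{conj:monic} is equivalent to the monic analogue $\#\{g:\deg g=n,\ \h(g)\le H,\ \gal(g)\ne S_n,A_n\}\ll H^{n-1}$.)
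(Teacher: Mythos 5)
The statement you are proving is not a theorem of the paper but an open conjecture: the authors state it precisely because they cannot remove the logarithmic factor from Theorem~\ref{thm:irre2}~(i), so there is no proof in the paper to compare against, and your argument has to stand on its own as complete --- which, by your own admission, it does not. The sound part of your proposal is the bookkeeping: since $\# R_n^*(H)\asymp H^n$ by Theorem~\ref{thm:reducible2} and the lower bound $\# M_n^*(H)\gg H^n$ is trivial, the conjecture is indeed equivalent to $\# I_n^*(H)\ll H^n$; and your treatment of the $2$-transitive branch is essentially the paper's own derivation of \eqref{derfis} (Lemma~\ref{lem:mure} splits such $f$ into those with $a_0=\pm a_n$, which are $O(H^n)$ in number, and degenerate ones, which are fewer by the non-monic analogue of \cite[Theorem 2]{dusa1}). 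Two slips there: the reduction to primitive polynomials is not ``exactly as in the proof of Theorem~\ref{thm:irre2}'' (that proof clears denominators by multiplying the discriminant by a power of $a_n$, it never factors out content), and for $n=3$ the group $A_3$ is not $2$-transitive, so ``not $2$-transitive'' does not imply $G\ne S_n,A_n$; this is harmless only because all prime $n$ (and $n=4,5$) are already covered by Theorem~\ref{thm:nonmonic}, so the genuinely open cases are composite $n\ge 6$, where $A_n$ is $2$-transitive.

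The genuine gap is exactly the step you flag and then leave unproved: a log-free bound of the shape $\#\{g\ \text{irreducible},\ \deg g=n,\ \mathrm{h}(g)\le H,\ \gal(g)\ne S_n,A_n\}\ll H^n$ in the non-monic normalisation (equivalently $\ll H^{n-1}$ in the monic one), i.e.\ a saving of a full power of $H$ over the trivial count with no $H^{\varepsilon}$ or $(\log H)^{c}$ loss. Nothing of this strength is available to the paper: Dietmann's results, which are the best input the authors have, lose $H^{\delta(n)+\varepsilon}$ and are precisely why \eqref{derfis} is weaker than the conjecture, and the authors' own substitute (Lemma~\ref{lem:exponent} plus the discriminant-and-Lemma~\ref{polyno} argument) is what produces the $(\log H)^{2n^2-n-1}$ factor you are trying to delete. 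So your proposal is a conditional reduction of one open problem to another open counting problem, not a proof; moreover the closing claim of ``equivalence'' with the Galois-count statement overreaches --- since the generalized degenerate condition only shrinks the set, that count is sufficient for the conjecture but by no means necessary. To turn this into a proof you would have to either supply the log-free Galois count for every composite $n\ge 6$ (including its non-monic version, which is not in the literature the paper cites), or find an argument that exploits the multiplicative-dependence condition itself more efficiently than Lemma~\ref{lem:exponent} does, e.g.\ by bounding the exponents in a dependence relation independently of $H$ for non-$2$-transitive groups, as the authors manage to do only for $n=4$.
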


\section{Preliminaries}  

In this section, we gather some concepts and results used later on. 

\subsection{Basic concepts} 

Given a polynomial 
$$
f(X)=a_nX^n+a_{n-1}X^{n-1}+\cdots+a_0=a_n (X-\al_1)\cdots (X-\al_n) \in \C[X],
$$ 
where $a_n \ne 0$, its {\it height} is defined by $H(f)=\max_{0 \leq j \leq n} |a_j|$,
and its {\it Mahler measure} by
$$
M(f)=|a_n| \prod_{j=1}^n \max\{1,|\al_j|\}.
$$
For each $f \in \C[x]$ of degree $n$, these quantities are related by the following well-known inequality
$$
H(f) 2^{-n} \leq M(f) \leq H(f) \sqrt{n+1}; 
$$
see, for instance, \cite[(3.12)]{Waldschmidt2000}. So, for a fixed $n$, one has
\begin{equation}\label{eq:Mahler}
H(f) \ll M(f) \ll H(f).
\end{equation}
If $f$ can be factored as the product of two non-constant polynomials $g,h\in \C[X]$ (that is, $f=gh$), 
then, by definition of the Mahler measure, we have 
$$
M(f)=M(g)M(h).
$$ 
So, combined  with \eqref{eq:Mahler} this yields
\begin{equation}   \label{eq:height}
H(g)H(h) \ll H(f) \ll H(g)H(h). 
\end{equation} 

 For an algebraic number $\alpha$ of degree $n$ (over $\Q$), its Mahler measure $M(\alpha)$ is the Mahler measure of its minimal polynomial $f$ over $\Z$. 
 For the \emph{(Weil) absolute height} $H(\alpha)$ of $\alpha$, we have
\begin{equation*}
H(\alpha)=M(\alpha)^{1/n}.
\end{equation*}
With this notation, from \eqref{eq:Mahler} one gets 
\begin{equation}   \label{eq:MW}
H(f)^{1/n} \ll H(\alpha) \ll H(f)^{1/n}. 
\end{equation}

\subsection{Counting roots of polynomials}

We start with the following simple result on the number of integer 
vectors at which a multivariate polynomial is zero.

\begin{lemma} \label{polyno}
Let $f \in \Z[X_1,\dots,X_m]$ be a polynomial of degree $d \ge 1$. 
Then, the number of vectors $(x_1,\dots,x_m) \in \Z^m$, whose
coordinates satisfy $|x_j| \leq H$ for each $j=1,\dots,m$ and 
 $f(x_1,\dots,x_m)=0$,
does not exceed $dm(2H+1)^{m-1}$. 
\end{lemma}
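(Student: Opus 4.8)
The plan is to prove Lemma~\ref{polyno} by induction on the number of variables $m$. The base case $m=1$ is immediate: a non-zero univariate polynomial of degree $d$ has at most $d$ roots in $\Z$, and indeed $d \le d\cdot 1 \cdot (2H+1)^0 = d$. For the inductive step, suppose the claim holds for polynomials in $m-1$ variables, and let $f \in \Z[X_1,\dots,X_m]$ have degree $d \ge 1$. Write $f$ as a polynomial in $X_m$ with coefficients in $\Z[X_1,\dots,X_{m-1}]$, say $f = \sum_{i=0}^{e} g_i(X_1,\dots,X_{m-1}) X_m^{i}$ where $e \le d$ and $g_e$ is not the zero polynomial.

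I would then split the count of zeros $(x_1,\dots,x_m)$ with all $|x_j|\le H$ into two cases according to whether the tuple $(x_1,\dots,x_{m-1})$ is a zero of the leading coefficient $g_e$ or not. In the first case, since $g_e$ has degree at most $d$ and is non-zero in $m-1$ variables, the inductive hypothesis bounds the number of admissible $(x_1,\dots,x_{m-1})$ with $g_e(x_1,\dots,x_{m-1})=0$ by $d(m-1)(2H+1)^{m-2}$; for each such tuple there are at most $2H+1$ choices of $x_m$, giving at most $d(m-1)(2H+1)^{m-1}$ zeros. In the second case, $g_e(x_1,\dots,x_{m-1}) \ne 0$, so $f(x_1,\dots,x_{m-1},X_m)$ is a non-zero univariate polynomial in $X_m$ of degree $e \le d$, hence has at most $d$ roots $x_m$; since there are at most $(2H+1)^{m-1}$ tuples $(x_1,\dots,x_{m-1})$ altogether, this case contributes at most $d(2H+1)^{m-1}$ zeros. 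Adding the two contributions gives at most $d(m-1)(2H+1)^{m-1} + d(2H+1)^{m-1} = dm(2H+1)^{m-1}$, completing the induction.

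The only mild subtlety — and the one step worth stating carefully — is the reduction to a non-zero leading coefficient: $f$ itself is non-zero (degree $d \ge 1$), so when viewed in $\Z[X_1,\dots,X_{m-1}][X_m]$ it genuinely has some highest-degree-in-$X_m$ term with non-zero coefficient polynomial $g_e$, and that $g_e$ has (total) degree at most $d - e \le d$, so the inductive hypothesis applies to it with the same $d$. Everything else is bookkeeping; I expect no real obstacle.
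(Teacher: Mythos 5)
Your proof is correct and is essentially the paper's own argument: induction on the number of variables, splitting the count according to whether the leading coefficient $g_e$ (viewing $f$ as a polynomial in $X_m$) vanishes at the tuple $(x_1,\dots,x_{m-1})$ or not. The only cosmetic difference is that you bound $\deg g_e \le d$ and $e \le d$ separately, whereas the paper uses the slightly sharper observation $\deg g_r + r \le d$; both bookkeepings yield the same bound $dm(2H+1)^{m-1}$.
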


\begin{proof}
We proceed the proof by induction on $m \ge 1$. 
The statement is obvious for $m=1$, since a univariate polynomial of degree $d \geq 1$ has at most $d$ roots. Assume that the assertion of the lemma is true for $m=k$. 
For $m=k+1$, we can write 
\begin{equation}\label{jons}
f(X_1,\dots,X_k,X_{k+1})=g_{0} +g_1 X_{k+1}+\dots +g_r X_{k+1}^r,
\end{equation} 
where 
$r \ge 1$, $g_0,\dots,g_r \in \Z[X_1,\dots,X_k]$ and $g_r$ is not zero identically. By our assumption, the number of vectors $(x_1,\dots,x_k) \in \Z^k$, where
$|x_j| \leq H$ for $j=1,\dots,k$, satisfying $g_r(x_1,\dots,x_k)=0$ does not exceed 
$$
k d_r (2H+1)^{k-1}, \quad \text{where} \quad d_r=\deg g_r.
$$
 Consequently, the number of vectors $(x_1,\dots,x_k,x_{k+1}) \in \Z^{k+1}$, where $(x_1,\dots,x_k)$ is one of the above vectors and $|x_{k+1}| \leq H$ does not exceed 
 $$
 k d_r (2H+1)^k.
 $$

For any other vector $(y_1,\dots,y_k) \in \Z^k$, where $|y_1|,\dots,|y_k| \leq H$, which is not equal to one of the above  vectors $(x_1,\dots,x_{k})$, the inequality
$g_r(y_1,\dots,y_k) \ne 0$ holds. Evidently, the number of such vectors is at most $(2H+1)^k$. In view of \eqref{jons} for each of them 
there are at most $r$ values of $y_{k+1}$ such that 
$f(y_1,\dots,y_k,y_{k+1})=0$, 
which yields the upper bound $r(2H+1)^{k}$.

Combining the above estimates, we see that the total number of vectors $(x_1,\dots,x_{k+1})\in \Z^{k+1}$,
where
$|x_1|, \dots,|x_{k+1}| \leq H$, at which $f$ given in \eqref{jons} vanishes, does not exceed 
$$
k d_r (2H+1)^k+r (2H+1)^k \leq (k+1)d(2H+1)^{k},
$$
since $d_r+r =\deg g_r+r \leq d$.
This completes the proof of the lemma. 
\end{proof}

We remark that in Lemma \ref{polyno} the growth rate $H^{m-1}$ is optimal, because $f$ may have a linear factor in $\Z[X]$. 
However, if $m\ge 2$ and $f$ is irreducible over $\overline{\Q}$ (the algebraic closure of $\Q$) of degree $d \ge 2$, 
then the bound can be sharpened to $O(H^{m-2+1/d+\varepsilon})$ for any $\varepsilon>0$ (the implied constant depends on $d,m,\varepsilon$); see \cite[Theorem A]{Pila}.

\subsection{Counting some special polynomials}

Let throughout $F_{n,k}^*(H)$ (resp. $F_{n,k}(H)$) be the set of integer polynomials (resp. monic integer polynomials)
of degree $n$ which are of height at most $H$ and are reducible over $\Z$ with a factor (not necessarily irreducible) of degree $k$,
$1 \le k \le n/2$. 

The following result follows from \cite{Waerden1936} in the monic case (see also \cite{Chela}) 
and from \cite{Kuba} in the non-monic case. (As indicated in
\cite{arduu} the result of \cite{Kuba} is misstated for $n=4$; see  \cite[Lemma 6]{arduu} for a correct version of monic and non-monic cases.). 
Here we give a simple proof. 

\begin{lemma}\label{lem:reducible}
For integers $n\ge 2$ and $k\ge 1$, where $1 \le k < n/2$, we have
$$
 \# F_{n,k}(H) \asymp H^{n-k}, \qquad 
\# F_{n,k}^*(H) \asymp H^{n+1-k}; 
$$
if $k=n/2$, we have 
$$
\# F_{n,k}(H) \asymp H^{n-k}\log H, \qquad 
\# F_{n,k}^*(H) \asymp H^{n+1-k}\log H.
$$
\end{lemma}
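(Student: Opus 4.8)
The plan is to prove both the monic and non-monic cases together, reducing everything to counting factorizations into a degree-$k$ part and a degree-$(n-k)$ part and then invoking the height relation \eqref{eq:height}. First I would fix a splitting: any $f \in F_{n,k}(H)$ (resp. $F_{n,k}^*(H)$) can be written as $f = gh$ with $g,h \in \Z[X]$ of degrees $k$ and $n-k$ respectively, and in the non-monic case we may normalize so that the leading coefficients are positive (absorbing the sign). By \eqref{eq:height} there is a constant $c=c(n)$ such that $H(g), H(h) \le cH$, so every such $f$ arises from a pair $(g,h)$ with both heights $O(H)$. Conversely every such product has height $O(H)$ by \eqref{eq:height} again. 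Hence
$$
\# F_{n,k}(H) \le \#\{(g,h)\ \text{monic},\ \deg g = k,\ \deg h = n-k,\ H(g),H(h) \le cH\} \ll H^{k}\cdot H^{n-k} = H^{n},
$$
which is too crude; the point of the lemma is the sharper exponent, so the counting of pairs must be organized more carefully, grouping products by the value they take.

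The efficient way, and the one I would carry out, is to count by fixing $h$ and letting $g$ vary, or rather to observe that distinct pairs can give the same $f$ only boundedly often once we restrict to, say, primitive $g$ of fixed degree $k$; but the cleanest route is a direct parametrization. For the \emph{lower} bound: in the monic case take $g$ to be any fixed monic polynomial of degree $k$ (e.g. $X^k$) and let $h$ range over all monic integer polynomials of degree $n-k$ and height $\le H/c$; this already gives $\gg H^{n-k}$ distinct products, and distinctness of the $h$'s gives distinctness of the $f$'s. The non-monic lower bound $\gg H^{n+1-k}$ is the same with $h$ ranging over non-monic polynomials of degree $n-k$, which have $n-k+1$ free coefficients. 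For the \emph{upper} bound: given $f = gh$, normalize $g$ to be primitive with positive leading coefficient; then $g \mid f$ in $\Q[X]$, and for each of the $\ll H^{n-k}$ (monic) or $\ll H^{n+1-k}$ (non-monic) choices of $h$ one recovers $f$, but we must bound the number of $g$'s. Here I would argue: the coefficients of $g$ are bounded by $O(H)$, but more importantly, $h = f/g$ forces a relation; the standard trick is to sum over the degree-$(n-k)$ cofactor $h$ (of which there are $\ll H^{n-k}$ in the monic case) and over the degree-$k$ factor $g$ (of which there are $\ll H^{k+1}$ or $\ll H^k$), but then use that for $k < n/2$ one has $n-k > k$, so the dominant count is governed by the \emph{larger} degree part, giving $\ll H^{n-k}$ resp. $\ll H^{n+1-k}$; the smaller part contributes only a constant factor once we note $g$ is essentially determined by $f$ up to $O(1)$ ambiguity after fixing its degree — more precisely one should count: number of $f$ of the form $gh$ $\le \sum_{g} \#\{h : H(h) \ll H\}$, and a monic $h$ of degree $n-k$ is determined by $n-k$ coefficients, so this is $\ll (\#\{g\}) \cdot H^{n-k}$; since $\#\{g\} \ll H^k$ but actually each resulting $f$ is overcounted, one must instead fix $f$ and bound the number of $(g,h)$ it factors into, which is trivially $\le 2^n$ by unique factorization. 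So the correct inequality is $\# F_{n,k}(H) \le \#\{h \text{ monic}, \deg h = n-k, H(h)\ll H\}\cdot\#\{g \text{ monic}, \deg g = k, H(g)\ll H\} \ll H^{n-k}\cdot H^{k}$ — which again is $H^n$. The resolution is that we do \emph{not} get to choose $g$ freely: once $h$ is chosen, $g = f/h$, so it is $f$ we are parametrizing, and we should instead say $f \leftrightarrow (g,h)$ is at-most-$2^n$-to-one but we only need, for the upper bound, that every $f \in F_{n,k}(H)$ has SOME factor $g$ of degree $k$; hence $\# F_{n,k}(H) \le \sum_{\deg g = k,\ g \text{ primitive},\ H(g) \ll H} \#\{h \text{ monic}, \deg h = n-k, H(gh)\le H\}$, and the inner count is $\ll H^{n-k}$ \emph{uniformly in $g$} while the number of primitive $g$ with bounded height is what it is — but the key realization is that we should swap: parametrize by $g$ monic (in the monic case $g$ must be monic too) and by $h$, but once $g$ is monic the product $gh$ monic forces $h$ monic, and the \textbf{number of monic $g$ of degree $k$ and height $\le cH$ is $\ll H^k$}, so the bound is $\ll H^k \cdot H^{n-k} = H^n$, still not sharp.

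The actual sharp argument — which is the main obstacle and the heart of the lemma — must use that for a \emph{fixed} product value the number of monic integer polynomials $f$ of degree $n$ that are divisible by SOME monic degree-$k$ polynomial of height $\ll H$ is dominated by choosing the \emph{complementary} factor of degree $n-k$ and then the factor of degree $k$, but with the observation that if we instead bound by choosing $h$ of degree $n-k$ ($\ll H^{n-k}$ ways) and then noting $g$ has degree $k \le n-k$ so $g$ has at most $H^k \le H^{n-k}$... no. I would instead follow the classical approach: the number of reducible monic $f$ of degree $n$, height $\le H$, with a factor of degree exactly $k$ is $\ll H^{n-k}$ because such $f$ lie on the image of the polynomial multiplication map $\Z^k \times \Z^{n-k} \to \Z^n$ restricted to a box, and the fibers are positive-dimensional: fixing the degree-$k$ factor $g$ leaves $n-k$ free parameters, but conversely the set of $f$ with a degree-$k$ factor is a union over $g$ of affine subspaces of dimension $n-k$, parametrized by a $k$-dimensional family of $g$'s, EXCEPT that generically $f$ determines $g$, so the image has dimension $\max(k, n-k) = n-k$, hence $\ll H^{n-k}$ lattice points (up to the log correction when $k = n/2$, where the two pieces have equal degree and an extra $\log H$ from the harmonic-type sum $\sum_{M(g) \le H} 1/ \text{something}$, or more concretely from $\sum_{1 \le m \le H} H/m \asymp H\log H$ when balancing Mahler measures $M(g) \le m$, $M(h) \le H/m$). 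The hard part is making this dimension/fiber heuristic into a rigorous count: I would do it by induction on $n$ or by the explicit estimate $\# F_{n,k}(H) \le \sum_{H(g) \le cH,\ g \text{ monic of degree }k} N(g)$ where $N(g) = \#\{h \text{ monic}, \deg h = n-k, H(gh) \le H\} \ll (H/H(g))^{n-k}$ — wait, that is not quite right either since $H(gh)$ is not simply $H(g)H(h)$ — but using the Mahler measure, $M(gh) = M(g)M(h)$ exactly, and $M(gh) \asymp H(gh) \le H$ forces $M(h) \ll H/M(g)$, and the number of monic integer $h$ of degree $n-k$ with $M(h) \le T$ is $\ll T^{n-k}$ (a standard Mahler-measure box count), so
$$
\# F_{n,k}(H) \ll \sum_{g \text{ monic, deg }k,\ M(g)\ll H} \left(\frac{H}{M(g)}\right)^{n-k} = H^{n-k}\sum_{g} M(g)^{-(n-k)}.
$$
Then the number of monic $g$ of degree $k$ with $M(g) \in [2^j, 2^{j+1})$ is $\ll 2^{j k}$ (again the box count, since $M(g) \le 2^{j+1}$ gives $\ll 2^{jk}$ such $g$), so the sum is $\ll \sum_j 2^{jk} 2^{-j(n-k)} = \sum_j 2^{j(2k-n)}$, which converges to $O(1)$ when $2k < n$ (giving the clean $H^{n-k}$) and equals $O(\log H)$ when $2k = n$ (summing $j$ from $0$ to $\ll \log H$), exactly as claimed. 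The non-monic cases are identical with "$n-k+1$" and "$k+1$" free coefficients replacing "$n-k$" and "$k$", changing the exponents to $H^{n+1-k}$. The main obstacle is thus establishing the two clean box-counts — $\#\{h \text{ monic of degree } d : M(h) \le T\} \ll T^d$ and its non-monic analogue $\ll T^{d+1}$ — which follow from \eqref{eq:Mahler} (equivalently $H(h) \ll M(h)$) together with the trivial count of integer points in a box, and then handling the geometric/harmonic sum. I would also need matching lower bounds, obtained by exhibiting explicit families as sketched above: for $k < n/2$, fix $g = (X-1)^k$ or $X^k$ and vary $h$; for $k = n/2$, use $\gg \sum_{1 \le m \le H} (H/m)^{n-k} \cdot (\text{something}) $ — more simply, take $g$ ranging over $\prod (X - a_i)$ with small roots and $h$ large, or directly cite \cite{Waerden1936, Chela, Kuba} as the paper already does, since the lemma explicitly says it "follows from" those references and only a "simple proof" is offered.
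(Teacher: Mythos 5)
Your final argument is essentially the paper's proof: the paper fixes the height $a$ of the degree-$k$ factor (giving $O(a^{k-1})$, resp.\ $O(a^{k})$, choices in the monic, resp.\ non-monic, case), bounds the number of cofactors by $O((H/a)^{n-k})$, resp.\ $O((H/a)^{n-k+1})$, via \eqref{eq:height}, and sums $H^{n-k}\sum_{a\le H} a^{2k-n-1}$, which is exactly your dyadic Mahler-measure sum $\sum_j 2^{j(2k-n)}$ yielding $O(1)$ for $k<n/2$ and $O(\log H)$ for $k=n/2$. The false starts in your write-up are harmless, and your lower-bound treatment (fixed factor for $k<n/2$, balanced families with bounded multiplicity or citing \cite{Waerden1936,Chela,Kuba} for $k=n/2$) matches the paper's own level of detail, so this is correct and the same approach.
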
 

\begin{proof}  
Suppose $f \in F_{n,k}(H)$.  Then, $f=gh$, where $g,h\in \Z[X]$ and $g$ is of degree $k$. 
By \eqref{eq:height}, we have $H(g)H(h) \ll H$, that is,  
$$
H(h) \ll H/H(g). 
$$ 
To count such polynomials $f$, we fix the height of $g$, say $a$. Then, the number of choices for $g$ is $O(a^{k-1})$, 
because at least one of coefficients of $g$ is equal to $\pm a$, and the number of choices of $h$ is $O((H/a)^{n-k})$. 
Thus, in view of \eqref{eq:height} we have 
$$
\# F_{n,k}(H) \asymp \sum_{a=1}^{H} a^{k-1} (H/a)^{n-k} = H^{n-k} \sum_{a=1}^{H} 1/a^{n+1-2k}. 
$$
So, for   $1 \le k < n/2$, we obtain
$$
 \# F_{n,k}(H) \asymp H^{n-k}; 
$$
in case $k=n/2$ we have 
$$
\# F_{n,k}(H) \asymp H^{n-k}\log H.
$$
Similarly, we can get the desired estimates for $\# F_{n,k}^{*}(H)$.
\end{proof}

We now count some special kinds of generalized degenerate integer polynomials.

\begin{lemma}   \label{lem:PnH}
Let $P_n(H)$ be the set of irreducible monic polynomials $f \in \Z[X]$  of degree $n\ge 2$ and height at most $H$ such that $f(0)=\pm 1$.  
Then, we have 
$$
\# P_n(H) = 2^nH^{n-1} + O(H^{n-2}). 
$$
\end{lemma}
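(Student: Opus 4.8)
The plan is to count monic integer polynomials of degree $n$ and height at most $H$ with $f(0)=\pm 1$, and then subtract the reducible ones. First I would observe that the total number of monic integer polynomials $f(X)=X^n+a_{n-1}X^{n-1}+\cdots+a_1X+a_0$ with $\max_j|a_j|\le H$ and $a_0=\pm 1$ is exactly $2(2H+1)^{n-1}$: there are two choices for $a_0$ and $2H+1$ choices for each of $a_1,\dots,a_{n-1}$. Expanding, $2(2H+1)^{n-1}=2^nH^{n-1}+O(H^{n-2})$, which already gives the main term. It remains to show that the number of \emph{reducible} such polynomials is $O(H^{n-2})$, since every irreducible monic $f$ with $f(0)=\pm1$ lies in $P_n(H)$ and conversely.

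For the reducibility count, suppose $f=gh$ with $g,h\in\Z[X]$ monic of degrees $k$ and $n-k$ respectively, $1\le k\le n-1$; we may assume $1\le k\le n/2$. Since $f(0)=g(0)h(0)=\pm1$ and $g(0),h(0)\in\Z$, we must have $g(0)=\pm1$ and $h(0)=\pm1$. By \eqref{eq:height}, $H(g)H(h)\ll H(f)\le H$. Arguing as in the proof of Lemma~\ref{lem:reducible}, I would fix $H(g)=a$; the number of monic $g$ of degree $k$ with height $a$ and $g(0)=\pm1$ is $O(a^{k-2})$ when $k\ge 2$ (one coefficient is forced to be $\pm a$, the constant term is $\pm1$, leaving $k-2$ free coefficients each ranging over $O(a)$ values), and is $O(1)$ when $k=1$ (then $g=X\pm1$). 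The number of monic $h$ of degree $n-k$ with $H(h)\ll H/a$ is $O((H/a)^{n-k})$. Summing over $a$ from $1$ to $H$ gives, for $k\ge 2$,
$$
\sum_{a=1}^{H} a^{k-2}(H/a)^{n-k}=H^{n-k}\sum_{a=1}^{H}a^{-(n-2k+2)},
$$
and since $n-2k+2\ge 2$ for $k\le n/2$, the sum over $a$ is $O(1)$, yielding $O(H^{n-k})=O(H^{n-2})$. The case $k=1$ gives $O(H^{n-1})$ at first glance, so that case needs separate care: if $X\pm1$ divides $f$ then $f(\mp1)=0$, which is one linear constraint on $(a_1,\dots,a_{n-1})$, but we also already have $a_0=\pm1$ fixed, so this is two constraints; more precisely $1+a_{n-1}+\cdots+a_1+a_0=0$ with $a_0=\pm1$ fixed determines $a_1$ in terms of the others, leaving $O(H^{n-2})$ choices. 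Hence all cases contribute $O(H^{n-2})$, and summing over the finitely many values of $k$ preserves this bound.

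The main obstacle is the linear-factor case $k=1$, where the naive "fix the height" bookkeeping is too lossy and one must instead exploit that $f(0)=\pm1$ together with $f(\pm1)=0$ is genuinely two independent linear conditions on the coefficient vector; handling this cleanly (and similarly noting that a repeated linear factor or a product of several linear factors only imposes more constraints, hence is even smaller) is the one place where a little extra argument beyond Lemma~\ref{lem:reducible} is required. Everything else is a routine adaptation of the summation already carried out in the proof of Lemma~\ref{lem:reducible}. Combining the exact count $2(2H+1)^{n-1}=2^nH^{n-1}+O(H^{n-2})$ with the bound $O(H^{n-2})$ for reducible polynomials yields $\#P_n(H)=2^nH^{n-1}+O(H^{n-2})$, as claimed.
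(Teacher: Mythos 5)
Your proposal is correct and takes essentially the same route as the paper: the main term is the trivial count $2(2H+1)^{n-1}$ of monic polynomials with $f(0)=\pm1$, and the reducible ones are shown to be $O(H^{n-2})$ by writing $f=gh$ with monic integer factors whose constant terms are forced to be $\pm1$ and invoking \eqref{eq:height}. The only difference is bookkeeping: the paper also uses $h(0)=\pm1$, so each factor loses one free coefficient and the bound $O(H^{m-1}\cdot H^{k-1})=O(H^{n-2})$ is immediate, which makes your height summation and the separate treatment of the linear-factor case unnecessary (though your handling of that case is correct).
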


\begin{proof}
The desired result is trivial for $n=2$, so below
we assume that $n \ge 3$. 

Let $f \in \Z[X]$ be of degree $n$  and height at most $H$ such that $f(0)=\pm 1$.  
Assume that $f$ is reducible over $\Z$ and can be written as $f=gh$ with $g,h\in \Z[X]$, where 
$$
g(X)=X^m+b_{m-1}X^{m-1}+ \cdots + b_1X+b_0,$$ $$h(X)=X^k+c_{k-1}X^{k-1}+ \cdots + c_1X +c_0. 
$$
Since $f(0)=\pm 1$, we must have $b_0=\pm 1$ and $c_0=\pm 1$. 
In view of \eqref{eq:height}, the number of such reducible polynomials $f$ is $O(H^{m-1}\cdot H^{k-1})=O(H^{n-2})$. 
This completes the proof. 
\end{proof}

\begin{lemma}   \label{lem:PnH*}
Let $P_n^*(H)$ be the set of irreducible polynomials $f \in \Z[X]$  of degree $n \ge 2$ and height at most $H$ such that the leading coefficient of $f$ is equal to $\pm f(0)$.  
Then, we have 
\begin{equation*}
\# P_n^*(H) = 2^{n+1}H^n + \left\{ \begin{array}{ll}
      O(H \log H) & \textrm{if $n=2$,}\\
           O(H^{n-1}) & \textrm{if $n\ge 3$}.
                 \end{array} \right.
\end{equation*}
\end{lemma}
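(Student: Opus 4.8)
The plan is to mimic closely the proof of Lemma~\ref{lem:PnH}, adjusting for the non-monic setting and for the two-sided condition on the leading coefficient and constant term. First I would count the integer polynomials $f(X)=a_nX^n+\cdots+a_0$ of degree $n$ and height at most $H$ satisfying $a_0=\pm a_n$. Choosing the leading coefficient $a_n$ (with $1\le |a_n|\le H$, so $2H$ choices), then $a_0$ is determined up to sign ($2$ choices), and the remaining $n-1$ coefficients $a_1,\dots,a_{n-1}$ range freely over integers of absolute value at most $H$ (each contributing $2H+1$ choices). This gives $2H\cdot 2\cdot(2H+1)^{n-1}=2^{n+1}H^n+O(H^{n-1})$ such polynomials in total, which is the claimed main term. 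It therefore remains to show that the number of \emph{reducible} such polynomials is absorbed into the error term.

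Next I would bound the reducible polynomials with $a_0=\pm a_n$. Write $f=gh$ with $g,h\in\Z[X]$ non-constant, $\deg g=k$, $\deg h=n-k$, $1\le k\le n/2$; by \eqref{eq:height} we have $H(g)H(h)\ll H$. The key extra input beyond Lemma~\ref{lem:PnH} is that the constraint $a_0=\pm a_n$ forces $g(0)h(0)=\pm(\text{leading coeff of }g)(\text{leading coeff of }h)$; in particular $|g(0)h(0)|=|\mathrm{lc}(g)\,\mathrm{lc}(h)|$, and since the product of the constant terms equals $\pm$ the product of the leading terms, the ratio $|g(0)|/|\mathrm{lc}(g)|$ and $|h(0)|/|\mathrm{lc}(h)|$ are reciprocal rationals. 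I would handle this by fixing $H(g)=a$; then $g$ has $O(a^k)$ choices (there are $O(a^k)$ integer polynomials of degree $k$ and height exactly $a$, and only $O(a^{k-1})$ of them actually have a coefficient equal to $\pm a$, but the cruder bound $O(a^k)$ suffices here — actually we need the sharper $O(a^{k-1})$ for $n=2$, see below), and $h$ has $O((H/a)^{n-k})$ choices by \eqref{eq:height}. Summing as in Lemma~\ref{lem:reducible},
\begin{equation*}
\sum_{a=1}^{H} a^{k-1}(H/a)^{n-k} \ll \begin{cases} H^{n-k} & \text{if } k<n/2,\\ H^{n-k}\log H & \text{if } k=n/2.\end{cases}
\end{equation*}
Since $k\ge 1$, for $n\ge 3$ the largest exponent $n-k$ is at most $n-1$ and the logarithmic factor only appears when $n-k=n/2\le n-1$, so every term is $O(H^{n-1})$. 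For $n=2$ the only factorization has $k=1=n/2$, giving $O(H\log H)$, which matches the stated error term in that case.

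The main obstacle is bookkeeping the $n=2$ case correctly: there the reducible polynomials are exactly products of two linear factors, $f=(a_1X+b_1)(a_2X+b_2)$ with $a_1b_1\cdot$ constraints, and the count genuinely is of order $H\log H$ (not $O(H)$), which is why the lemma states a separate, weaker error term for $n=2$. I would verify this by the same summation with $k=1$, $n-k=1$, noting the harmonic-sum contribution $\sum_{a\le H}1/a\asymp\log H$; one should also check that the condition $a_0=\pm a_n$, i.e. $b_1b_2=\pm a_1a_2$, does not reduce the order of magnitude, which it does not since for each pair of leading coefficients a positive proportion of constant-term pairs satisfy it. For $n\ge 3$ no such subtlety arises and the error term $O(H^{n-1})$ follows immediately from the display above. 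Assembling the total count minus the $O(H^{n-1})$ (resp. $O(H\log H)$) reducible ones yields the stated asymptotic for $\#P_n^*(H)$.
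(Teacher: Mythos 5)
Your main-term computation (all degree-$n$ polynomials of height at most $H$ with $a_0=\pm a_n$ number $4H(2H+1)^{n-1}=2^{n+1}H^n+O(H^{n-1})$) is fine and matches the paper's implicit reduction to bounding the \emph{reducible} polynomials satisfying $a_0=\pm a_n$. The gap is in that bound. In the non-monic setting the number of integer polynomials of degree $k$ and height exactly $a$ is $\asymp a^{k}$, not $O(a^{k-1})$: your parenthetical claim that ``only $O(a^{k-1})$ of them actually have a coefficient equal to $\pm a$'' is the \emph{monic} count and is false here, and your displayed sum $\sum_{a\le H}a^{k-1}(H/a)^{n-k}$ rests on it. Even if one grants the (correct, but unproved by you) observation that once $g$ and the coefficients of $h$ other than $h(0)$ are chosen, the constraint $g(0)h(0)=\pm\,\mathrm{lc}(g)\mathrm{lc}(h)$ fixes $h(0)$ up to sign, the honest sum is $\sum_{a\le H}a^{k}(H/a)^{n-k}$. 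This does give $O(H^{n-1})$ for $n\ge 4$, but for $n=3$, $k=1$ it gives $O(H^2\log H)$ and for $n=2$, $k=1$ it gives $O(H^2)$ — i.e.\ precisely the delicate cases of the lemma are not established, and your claimed $O(H\log H)$ for $n=2$ comes only from the erroneous $a^{k-1}$. To recover the stated error terms one must use the constraint through \emph{divisibility}, not merely ``determined up to sign'': $c_0=\pm b_1c_{n-1}/b_0$ has to be an integer, which confines $c_{n-1}$ to a sparse progression.

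This is exactly what the paper does. For $n\ge 3$ it reduces (via Lemma~\ref{lem:reducible}) to a linear factor $b_1X+b_0$, notes $a_0=b_0c_0$, $a_n=b_1c_{n-1}$, so $b_0,b_1\in\cD(a_0)$ and $\lcm[b_0,b_1]\mid a_0$, and then runs a gcd/divisor-sum computation (using $\#\cD(b_0)\ll b_0^{\eps}$) to squeeze the bound down to $O(H^{n-1})$. For $n=2$ it abandons this scheme altogether and parametrizes the reducible polynomials with $a_0=\pm a_2$ as $b(kX-m)(mX\pm k)$ with coprime $k\le m\le\sqrt{2H/|b|}$, giving $O\big(\sum_{b\le H}H/b\big)=O(H\log H)$. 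Neither ingredient appears in your proposal, so as written it does not prove the lemma for $n=2$ or $n=3$; your final heuristic that ``a positive proportion of constant-term pairs satisfy'' the constraint concerns optimality of the error term, not the upper bound you need.
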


\begin{proof}
We first consider the case $n=2$. 
It suffices to count all the integer polynomials of the form $a_0X^2+a_1X \pm a_0$, where $|a_0|, |a_1| \le H$,
which split into two linear factors. 
The two roots of such a polynomial must be
$m/k$ and $k/m$ (or $-k/m$) with coprime integers $k,m$. Here, without loss of generality, we 
can assume that  $m \ge k >0$.
Then, the polynomial is divisible by $(kX-m)(mX - k)$ or $(kX-m)(mX + k)$, so
it has the form $b(kX-m)(mX \pm k)$ for some non-zero integer $b$.
This yields $km \leq H/|b|$ and $m^2 \pm k^2 \leq H/|b|$.
Since $k^2 \le H/|b|$, we obtain $m^2 \leq 2H/|b|$.
So, $k \leq m \leq \sqrt{2H/|b|}$. It is known that the number
of such coprime pairs $(k,m)$ satisfying $k \leq m \leq \sqrt{2H/|b|}$ is asymptotic to $(6/\pi^2) 2H/|b|$ when $H/|b| \to \infty$. 
Thus, the number of such polynomials is at most 
$$
O \Big( \sum_{b=1}^{H} H/b  \Big) = O(H\log H), 
$$
which implies the desired result for $n=2$. 

Now, let $n \ge 3$. 
To obtain the desired result, it suffices to count reducible polynomials $f \in \Z[X]$  of 
degree $n$ and height at most $H$ such that the leading coefficient of $f$ is equal to $\pm f(0)$.
By Lemma \ref{lem:reducible}, we only need to consider such reducible polynomials having a linear factor. 
Suppose that 
$$
f(X)=a_nX^n + \cdots + a_1X + a_0 = (b_1X+b_0)(c_{n-1}X^{n-1}+\cdots + c_1X + c_0), 
$$
where all the coefficients are in $\Z$ and $b_1c_{n-1} \ne 0$. 
By assumption, $a_n=\pm a_0$. We also have $a_n=b_1c_{n-1}$ and $a_0=b_0c_0$. 
So, if we fix $a_0,b_0,b_1$, then $c_0,c_{n-1}$ are also fixed up to a sign. 
For any non-zero integer $m$, let $\cD(m)$ be the set of its positive divisors. 
Then, the number of such reducible polynomials $f$ having a linear factor is at most 
\begin{equation}   \label{eq:Pn*}
O \Big( \sum_{a_0=1}^{H}  \sum_{\substack{b_0,b_1 \in \cD(a_0) \\ b_1 \le b_0}} (H/b_0)^{n-2}  \Big). 
\end{equation}

Since $\lcm[b_0,b_1]$ divides $a_0$, it also does not exceed $H$. 
Noticing that the number of $a_0$ divisible by both $b_0$ and $b_1$ is at most 
$H/\lcm[b_0,b_1]$, we find that 
 the estimate \eqref{eq:Pn*} becomes
\begin{align*}
&    O \Big( H^{n-2} \sum_{\substack{ b_1 \le b_0 \le H \\ \lcm[b_0, b_1] \le H}} \frac{H}{\lcm[b_0,b_1]} \cdot \frac{1}{b_0^{n-2}}  \Big)  \\
 & \quad =  O \Big( H^{n-1} \sum_{b_0=1}^{H} \frac{1}{b_0^{n-1}} \sum_{b_1 \le b_0} \frac{\gcd(b_0,b_1)}{b_1}  \Big) ,
\end{align*}
which, by  letting  $d=\gcd(b_0,b_1) \in \cD(b_0)$ and then $b_1=cd$ for some $c \le b_0/d$, reduces to 
\begin{align*}
&   O \Big( H^{n-1} \sum_{b_0=1}^{H} \frac{1}{b_0^{n-1}} \sum_{d \in \cD(b_0)} \sum_{c\le b_0/d} \frac{d}{cd}  \Big)  \\
 & \quad = O \Big( H^{n-1}\sum_{b_0=1}^{H} \frac{\# \cD(b_0) \log b_0}{b_0^{n-1}} \Big) = O\big(H^{n-1}\big), 
\end{align*}
where we use $n\ge 3$ and  the fact that $\# \cD(b_0) \ll b_0^\varepsilon$ for any $\varepsilon>0$ and $b_0$ large enough. 
\end{proof}

We remark that the error term for the case $n \geq 3$ in Lemma \ref{lem:PnH*} is optimal. It sufffices to consider the polynomials divisible by $x-1$. It is also optimal for $n=2$.
Indeed, let us fix any positive integer $b$ in the range
$1 \leq b \leq H/2$. Consider coprime pairs $(k,m)$, $1\le k<m$, where
$m<\sqrt{H/(2b)}$. Asymptotically, There are $(3/\pi^2)H/b$ of them, and each pair
gives a different polynomial $b(kX-m)(mX-k)$ satisfying all the conditions of the lemma except the irreducibility. 
Summing over $b$ we get that the number of the required polynomials is at least $H\log H$ up to a multiplicative constant.

\begin{lemma}   \label{lem:QnH}
Let $Q_n(H)$ be the set of monic polynomials in $\Z[X]$  of degree $n \ge 2$ and height at most $H$ such that they are divisible by $X+1$ or $X-1$.  
Then, we have 
$$
\# Q_n(H) = 2\nu_n H^{n-1} + O(H^{n-2}). 
$$
\end{lemma}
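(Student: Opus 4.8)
The plan is to count monic integer polynomials of degree $n$ and height at most $H$ that are divisible by $X-1$, by $X+1$, and then correct for double-counting those divisible by both. A polynomial $f$ of degree $n$ is divisible by $X-1$ precisely when $f(X)=(X-1)g(X)$ for a monic integer polynomial $g$ of degree $n-1$; writing $g(X)=X^{n-1}+x_{n-1}X^{n-2}+\dots+x_1$ with $x_1,\dots,x_{n-1}\in\Z$, one computes the coefficients of $f$ as integer linear forms in $x_1,\dots,x_{n-1}$ (together with the leading $1$), and the height condition $H(f)\le H$ becomes a system of linear inequalities $|\ell_i(x_1,\dots,x_{n-1})|\le H$. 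After dividing through by $H$ and setting $t_j=x_j/H$, the count of such $f$ is the number of lattice points of $\tfrac1H\Z^{n-1}$ in a fixed convex body, which by a standard Davenport-type lattice point estimate is $(\mathrm{vol})\,H^{n-1}+O(H^{n-2})$, the error term being controlled since all the forms have bounded degree in each variable so the boundary is cut out by finitely many polynomial surfaces.

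The key point is to identify that convex body with the one defining $\nu_n$. Writing $f(X)=(X-1)(X^{n-1}+x_{n-1}X^{n-2}+\dots+x_1)$, the coefficients of $f$ are $1$, $x_{n-1}-1$, $x_{n-2}-x_{n-1}$, $\dots$, $x_1-x_2$, $-x_1$; so the nonleading coefficients are exactly the consecutive differences of the sequence $0,1,x_{n-1},x_{n-2},\dots,x_1$ read appropriately. After a harmless invertible integer change of variables — taking the partial sums as new coordinates, or equivalently setting $y_i$ to be the $i$-th nonleading coefficient for $i=1,\dots,n-1$ subject to the single linear relation that they sum to $-1$ (since $f(1)=0$ forces $1+\sum(\text{nonleading coeffs})=0$) — the region $\{|y_i|\le 1,\ |\sum y_i|\le 1\}$ appears, whose volume is $\nu_n$ by definition; a shift by the fixed vector coming from the "$-1$" does not change the volume. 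This gives $\nu_n H^{n-1}+O(H^{n-2})$ polynomials divisible by $X-1$, and by the substitution $X\mapsto -X$ (which preserves monicity, degree and height) the same count for those divisible by $X+1$.

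Finally, a polynomial divisible by both $X-1$ and $X+1$ is divisible by $X^2-1$, hence has the form $(X^2-1)g(X)$ with $g$ monic of degree $n-2$ and integer coefficients; by \eqref{eq:height} its height constrains $H(g)\ll H$, so there are $O(H^{n-2})$ of these. By inclusion–exclusion, $\# Q_n(H)=2\nu_n H^{n-1}+O(H^{n-2})$. The main obstacle is the lattice-point counting with a sharp $O(H^{n-2})$ error: one must verify that the boundary of the relevant convex polytope, after the linear substitutions, is covered by $O(1)$ pieces each parametrizable by $O(H^{n-2})$ lattice points, which follows from the Lipschitz/Davenport principle since the polytope is the bounded intersection of finitely many half-spaces with coefficients independent of $H$; I expect the volume identification and this error bound to be the only non-routine parts, and both are short once the change of variables is set up correctly.
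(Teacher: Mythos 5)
Your proof is correct and follows essentially the same route as the paper: reduce to polynomials divisible by $X-1$ via the sign change $f(X)\mapsto(-1)^nf(-X)$, observe that polynomials divisible by both $X-1$ and $X+1$ contribute only $O(H^{n-2})$, and count the remaining ones as lattice points in the $H$-dilate of the convex body defining $\nu_n$, the relation $f(1)=0$ giving the single linear constraint on the non-leading coefficients. The only difference is that you carry out this final lattice-point count explicitly (with the Davenport-type $O(H^{n-2})$ boundary error), whereas the paper delegates exactly this step to the proof of \cite[Lemma 4]{Du2014}.
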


\begin{proof}
Suppose $f \in \Z[X]$ satisfies $f(-1)=0$. Then, by changing the signs of the coefficients of $f$ corresponding to odd powers, 
one obtains a polynomial $g\in \Z[X]$ for which $g(1)=0$. The converse is also true. 
So, we only need to count (twice) the polynomials in $Q_n(H)$ which are divisible by $X-1$, since the number of monic polynomials divisible by both $X-1$ and $X+1$ is $O(H^{n-2})$.
With these assumptions, the proof follows along the same lines as the proof of \cite[Lemma 4]{Du2014} (about the non-monic case). 
\end{proof}

The following is a special case of \cite[Lemma 4]{Du2014}. 

\begin{lemma}   \label{lem:QnH*}
Let $Q_n^*(H)$ be the set of polynomials in $\Z[X]$  of degree $n \ge 2$ and height at most $H$ such that they are divisible by $X+1$ or $X-1$.  
Then, we have 
$$
\# Q_n^*(H) = 2\nu_{n+1} H^n + O(H^{n-1}). 
$$
\end{lemma}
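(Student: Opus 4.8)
The plan is to reduce the statement to \cite[Lemma 4]{Du2014} by the same device used in the proof of Lemma~\ref{lem:QnH}. First I would note that the map sending $f(X)$ to the polynomial obtained by flipping the signs of the coefficients attached to odd powers of $X$ is a height-preserving involution on the set of integer polynomials of degree $n$ and height at most $H$, and it interchanges divisibility by $X-1$ with divisibility by $X+1$. Hence $\#Q_n^*(H)$ equals twice the number of polynomials divisible by $X-1$ minus the number divisible by both $X-1$ and $X+1$; the latter count is $O(H^{n-1})$, since such a polynomial is $(X^2-1)g(X)$ with $\deg g = n-2$ and $H(g)\ll H$ by \eqref{eq:height}, giving at most $O(H^{n-1})$ choices. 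So it suffices to show that the number of integer polynomials of degree $n$, height at most $H$, divisible by $X-1$ is $\nu_{n+1}H^n + O(H^{n-1})$.

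Next I would make the divisibility explicit: $f$ is divisible by $X-1$ exactly when $f(1)=0$, i.e.\ when $a_n+a_{n-1}+\cdots+a_0=0$ for the coefficient vector $(a_0,\dots,a_n)$. Thus I am counting integer points $(a_0,\dots,a_n)\in\Z^{n+1}$ with $|a_i|\le H$ for all $i$ and $\sum_i a_i = 0$, and with the genuine-degree condition $a_n\ne 0$ (and, if one insists the degree be exactly $n$, also excluding $a_n=0$; the polynomials with $a_n=0$ contribute a lower-order term). Solving the linear constraint for one coordinate, say $a_n = -(a_0+\cdots+a_{n-1})$, turns this into counting $(a_0,\dots,a_{n-1})\in\Z^n$ with $|a_i|\le H$ for $i<n$ and $\bigl|\sum_{i=0}^{n-1}a_i\bigr|\le H$. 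After rescaling by $H$ this is exactly a lattice-point count in $H$ times the symmetric convex body defining $\nu_{n+1}$ (the body in $\R^n$ cut out by $|x_i|\le 1$, $i=1,\dots,n$, and $|\sum_{i=1}^{n} x_i|\le 1$), so the standard estimate that the number of lattice points in a dilated bounded convex body equals the volume of the dilate plus an error of the order of its surface area gives $\nu_{n+1}H^n + O(H^{n-1})$. The conditions $a_n\ne 0$ and $a_n$ in the open interval just remove an $O(H^{n-1})$ boundary contribution.

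Since all of this is precisely what \cite[Lemma 4]{Du2014} establishes for counting non-monic integer polynomials divisible by a fixed linear factor, the cleanest write-up is simply to invoke it: "By the same argument as in the proof of Lemma~\ref{lem:QnH}, using the sign-flip involution to reduce to counting polynomials divisible by $X-1$, the result follows from \cite[Lemma 4]{Du2014}." The only point requiring a word of care is the bookkeeping of the degree-exactly-$n$ versus degree-at-most-$n$ conventions and the $O(H^{n-1})$ from polynomials divisible by both $X\pm 1$; neither affects the main term, so there is no real obstacle here — this lemma is essentially a restatement of a known count, included for convenient reference.
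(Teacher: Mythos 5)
Your proposal is correct and matches the paper, which proves this lemma by simply observing it is a special case of \cite[Lemma 4]{Du2014}; your inclusion--exclusion via the sign-flip involution $f(X)\mapsto f(-X)$, the $O(H^{n-1})$ bound for polynomials divisible by $X^2-1$, and the lattice-point count in the dilated convex body of volume $\nu_{n+1}$ are all sound. The extra self-contained counting argument you sketch is accurate but not needed beyond the citation.
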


\subsection{Multiplicative relations with conjugate algebraic num\-bers}

The multiplicative independence of the conjugates of prime 
degree was first established by Kurbatov (see \cite{kurb1}, \cite{kurb2}, \cite{kurb3}).  
The lemma below is given in \cite[Theorem 3]{ds1}.

\begin{lemma} 
\label{lem:mure0}
Let $p \geq 3$ be a prime number and let 
$$
f(X)=X^p+a_{p-1}X^{p-1}+\dots+a_1X+a_0 \in \Q[X]
$$
be an irreducible polynomial over $\Q$ such that 
$a_0 \ne \pm 1$ and $a_j \ne 0$ for at least one $j$ in the range
$1 \leq j \leq p-1$. Then, its roots are multiplicatively independent. 
\end{lemma}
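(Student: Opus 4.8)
The plan is to prove Lemma~\ref{lem:mure0} by assuming, for contradiction, that the roots $\alpha_1,\dots,\alpha_p$ of the irreducible polynomial $f$ satisfy a nontrivial multiplicative relation $\alpha_1^{k_1}\cdots\alpha_p^{k_p}=1$ with $(k_1,\dots,k_p)\in\Z^p\setminus\{0\}$, and deriving a constraint that forces either $a_0=\pm1$ or $a_1=\dots=a_{p-1}=0$. First I would observe that the Galois group $G$ of the splitting field of $f$ over $\Q$ acts transitively on $\{\alpha_1,\dots,\alpha_p\}$, and since $p$ is prime, $G$ is a transitive subgroup of $S_p$ whose order is divisible by $p$; in particular $G$ contains a $p$-cycle $\sigma$. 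Applying the powers $\sigma^0,\sigma^1,\dots,\sigma^{p-1}$ to the relation and multiplying all $p$ of the resulting relations together, the exponent of each $\alpha_j$ in the product becomes $S:=k_1+\dots+k_p$, so we get $(\alpha_1\cdots\alpha_p)^S = 1$, i.e. $((-1)^p a_0)^S=1$; since $p$ is odd this reads $(-a_0)^S=1$, so either $S=0$ or $a_0=\pm1$.

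Next I would handle the case $S=0$. Here I want to use the transitivity of $G$ more carefully to promote the single relation into a system. Consider the $\Z$-lattice $\Lambda$ of all integer vectors $(m_1,\dots,m_p)$ with $\alpha_1^{m_1}\cdots\alpha_p^{m_p}=1$; this is a $G$-submodule of $\Z^p$. The relation vector $\mathbf{k}$ lies in $\Lambda\setminus\{0\}$ and, because $S=0$, it lies in the augmentation submodule $\Z^p_0=\{(m_1,\dots,m_p):\sum m_i=0\}$. Now $\Z^p_0\otimes\Q$ is, as a $G$-module (with $G$ acting through its quotient $\bar G\le S_p$ by permuting coordinates), the standard $(p-1)$-dimensional representation; since $p$ is prime and $\bar G$ is transitive, this representation of $\bar G$ is \emph{irreducible} over $\Q$ (the standard representation of any transitive group of prime degree is $\Q$-irreducible — this is the key group-theoretic input, essentially Kurbatov's observation). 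Therefore the $G$-submodule of $\Z^p_0\otimes\Q$ generated by $\mathbf k$ is all of $\Z^p_0\otimes\Q$, which means $\Lambda\otimes\Q\supseteq\Z^p_0\otimes\Q$, i.e. $\Lambda$ has rank $\ge p-1$ inside $\Z^p_0$ (in fact equals $\Z^p_0$ after saturation). Consequently the multiplicative rank of $\{\alpha_1,\dots,\alpha_p\}$ modulo torsion is at most $1$: the quotient $\Z^p/(\Lambda_{\mathrm{sat}})$ has rank $\le 1$, so $\alpha_1,\dots,\alpha_p$ lie (modulo roots of unity) on a single multiplicative $1$-parameter family. Concretely this yields, for every pair $i,j$, that $\alpha_i/\alpha_j$ is a root of unity times a fixed power — working this out, all $\alpha_i$ have the same absolute value at every place, so $M(f)$ equals $|a_0|$... and then I would argue that if $a_j\ne0$ for some $1\le j\le p-1$, the Newton polygon / the shape of the $p$-th cyclotomic-like relation forces $|a_0|=1$, a contradiction.

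Alternatively, and more cleanly, once $\Lambda$ contains (a finite-index sublattice of) $\Z^p_0$, every ratio $\alpha_i/\alpha_j$ is a root of unity, so $f$ is degenerate in the strong sense $\alpha_1^N=\dots=\alpha_p^N$ for some $N$; then $f(X)^{?}$ divides something of the form $X^{pN}-a_0^N$ up to the usual manipulation, and comparing with irreducibility of degree exactly $p$ one finds that $f$ must be of the form $X^p+a_0$ (so all intermediate coefficients vanish) — unless this fails, in which case one is forced back into $a_0=\pm1$. I would structure the write-up so that the two cases $S\ne0$ and $S=0$ each terminate in one of the two allowed conclusions. The main obstacle I anticipate is making the $S=0$ case fully rigorous: specifically, proving that the standard representation of a transitive subgroup of $S_p$ ($p$ prime) is irreducible over $\Q$, and then converting "$\Lambda$ is large" into the clean statement "all $\alpha_i/\alpha_j$ are roots of unity and hence $a_0=\pm1$ or all middle coefficients vanish." The irreducibility fact is standard (it follows from the fact that a transitive group of prime degree is primitive, hence its standard permutation character has no constituent other than the trivial one and an irreducible of degree $p-1$ when the action is $2$-transitive, and is handled by Kurbatov's argument in general via the $p$-cycle), but I would need to cite or reprove it carefully; the paper already invokes \cite{kurb1,kurb2,kurb3,ds1}, so referencing the precise statement in \cite[Theorem 3]{ds1} is legitimate and the cleanest route is simply to reduce to that citation, which is exactly what the lemma statement does.
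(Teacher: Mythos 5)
The paper itself gives no proof of this lemma: it is quoted verbatim from Drmota and Ska{\l}ba \cite[Theorem 3]{ds1} (with the underlying prime-degree independence going back to Kurbatov \cite{kurb1,kurb2,kurb3}), so if your intention is simply to "reduce to that citation", as you say at the end, you are doing exactly what the paper does and nothing needs checking. Judged as a self-contained proof, however, your skeleton is sound only up to a point. The $p$-cycle argument correctly disposes of the case $k_1+\dots+k_p\neq 0$ (giving $a_0=\pm1$); the relation lattice $\Lambda$ is indeed Galois-stable; and the $\Q$-irreducibility of the sum-zero subspace for a transitive subgroup of $S_p$ is true and easy: restricted to the cyclic group generated by the $p$-cycle it is $\Q[x]/(x^p-1)$ modulo the trivial summand, i.e.\ $\Q(\zeta_p)$, a simple module since $[\Q(\zeta_p):\Q]=p-1$. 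This correctly reduces the lemma to: if $f$ is irreducible of prime degree $p\ge 3$ and all ratios $\alpha_i/\alpha_j$ are roots of unity (equivalently $\alpha_1^N=\dots=\alpha_p^N$), then all middle coefficients of $f$ vanish.

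That last implication is the genuine gap, and it is precisely the nontrivial kernel of the Kurbatov/Drmota--Ska{\l}ba theorem; neither of your two sketches establishes it. The Mahler-measure/Newton-polygon variant cannot work as stated: "all $|\alpha_i|$ equal, $M(f)=|a_0|$" is also satisfied by the admissible binomials $X^p+a_0$, so no metric invariant of this kind can force $|a_0|=1$; the argument must instead show the middle coefficients vanish, and nothing in that sketch sees them. The "divides $X^{pN}-a_0^N$, compare degrees" variant is likewise not an argument, since the same divisibility holds for the binomials and degree comparison alone does not kill the middle coefficients. One concrete way to close the gap: suppose $a_{p-j}\neq 0$ for some $1\le j\le p-1$, and write $\alpha_i=\zeta_i\alpha_1$ with $\zeta_i^N=1$. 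Then $a_{p-j}=\pm\alpha_1^{\,j}e_j(\zeta_1,\dots,\zeta_p)$ forces $e_j(\zeta_1,\dots,\zeta_p)\neq 0$ and $\alpha_1^{\,j}\in\Q(\zeta_N)$, while $a_0=\pm\alpha_1^{\,p}\zeta_1\cdots\zeta_p$ gives $\alpha_1^{\,p}\in\Q(\zeta_N)$; since $\gcd(j,p)=1$ this yields $\alpha_1\in\Q(\zeta_N)$, so $\Q(\alpha_1)$ is abelian over $\Q$ of odd prime degree $p$, hence contains no roots of unity besides $\pm1$. But $\Q(\alpha_1)$ is then Galois and contains all conjugates $\zeta_i\alpha_1$, so each $\zeta_i=\pm1$ and $f$ has at most two distinct roots, contradicting irreducibility of degree $p\ge3$. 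Without some such argument (or the citation to \cite{ds1}), your proposal stops exactly where the lemma becomes nontrivial.
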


The following lemma is \cite[Theorem 3]{Dixon} (see also \cite[Corollary 2]{smy2} or \cite[Theorem $4'$]{dub}):

\begin{lemma} 
\label{lem:mure11}
Let $n \geq 3$ be positive integer and let
$\al_1,\dots,\al_n$ be non-degenerate conjugate algebraic numbers of degree $n$
over $\Q$. Assume that $k_1,\dots,k_n \in \Z$ and $|k_1| \geq \sum_{j=2}^n |k_j|$. Then,
$\al_1^{k_1}\al_2^{k_2}\cdots \al_n^{k_n} \notin \Q$.
\end{lemma}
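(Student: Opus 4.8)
The plan is to argue by contradiction: suppose $\al_1^{k_1}\al_2^{k_2}\cdots\al_n^{k_n}=q\in\Q$, where we may assume $k_1>0$ (after possibly inverting the relation and relabelling, which is harmless since the $\al_j$ are conjugates and all non-zero, being non-degenerate of degree $n$). The dominance hypothesis $k_1\ge\sum_{j\ge 2}|k_j|$ should be used to control the archimedean size of $\al_1$ relative to the others at \emph{every} embedding of the normal closure, and hence to force $\al_1$ to be, in a precise valuation-theoretic sense, ``the same'' as each of its conjugates up to roots of unity --- contradicting non-degeneracy unless the relation is trivial.

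Concretely, first I would pass to the Galois closure $F=\Q(\al_1,\dots,\al_n)$ with $G=\Gal(F/\Q)$, and for each $\sigma\in G$ apply $\sigma$ to the relation. Since $\sigma$ permutes $\{\al_1,\dots,\al_n\}$ and fixes $q$, this produces $n!$-many (with multiplicity) multiplicative relations among the $\al_j$ with the \emph{same} multiset of exponents $\{k_1,\dots,k_n\}$ but permuted. Dividing two such relations (one where $\al_i$ carries the exponent $k_1$ and one where $\al_j$ does) yields, after collecting terms, a relation of the shape $\al_i^{k_1}\al_j^{-k_1}\cdot\prod_{\ell}\al_\ell^{m_\ell}=1$ where every $|m_\ell|\le 2\sum_{t\ge 2}|k_t|\le 2k_1$. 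The key point is then to show the exponent $k_1$ on $\al_i/\al_j$ genuinely dominates, so that for any non-archimedean valuation $v$ of $F$ one is forced to have $v(\al_i)=v(\al_j)$, and for any archimedean $v$ one gets $|\al_i|_v=|\al_j|_v$; combined with the product formula this pins down $\al_i/\al_j$ to be an algebraic number all of whose conjugates have absolute value $1$ at every place, i.e.\ a root of unity (by Kronecker's theorem). That would make $f$ degenerate, a contradiction --- so the original relation must have been trivial, i.e.\ $k_1=0$ (forcing all $k_j=0$) and then $q$ is vacuous.

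The main obstacle I anticipate is making the ``domination forces equality of valuations'' step fully rigorous, because after dividing permuted relations the residual exponents $m_\ell$ need not be small compared to $k_1$ in general --- the naive bound $2k_1$ is not $<k_1$. The right fix is probably \emph{not} to divide two relations directly, but to start from the single relation $\al_1^{k_1}=q\cdot\prod_{j\ge 2}\al_j^{-k_j}$ and feed it to a height/valuation estimate: at a place $v$, $k_1\cdot v(\al_1)$ equals $v(q)+\sum_{j\ge 2}(-k_j)v(\al_j)$, and using $k_1\ge\sum|k_j|$ together with the fact that the $v(\al_j)$ for conjugate $\al_j$ take a bounded symmetric set of values, one should be able to trap $v(\al_1)$ --- and by conjugating, every $v(\al_j)$ --- into a single value, then run the Kronecker argument. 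I would look to \cite[Theorem~3]{Dixon} or \cite[Corollary~2]{smy2} for the precise packaging, since the statement is quoted from there; if reproducing it from scratch, the cleanest route is via the Weil height: $k_1 h(\al_1)=h(\prod\al_j^{-k_j}q)\le\sum_{j\ge 2}|k_j|h(\al_j)+h(q)=\big(\sum_{j\ge2}|k_j|\big)h(\al_1)+h(q)$ since conjugates share the same height, forcing $h(\al_1)\le h(q)/(k_1-\sum_{j\ge2}|k_j|)$ when the denominator is positive, and then a more delicate place-by-place analysis (rather than just heights) to upgrade ``small height'' to ``$\al_i/\al_j$ a root of unity''. That upgrade, handling the possibility $k_1=\sum_{j\ge2}|k_j|$ where the height bound degenerates, is where the real work lies.
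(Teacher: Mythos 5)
The paper offers no proof of Lemma~\ref{lem:mure11}: it is quoted verbatim from Dixon \cite[Theorem 3]{Dixon} (see also \cite[Corollary 2]{smy2} and \cite{dub}), so the only question is whether your sketch itself amounts to a proof, and it does not. The first route you describe (apply each element of the Galois group and divide two permuted relations) you rightly abandon, since the residual exponents are bounded only by $2|k_1|$ and nothing dominates. The fallback is also not a proof: from $\al_1^{k_1}=q\prod_{j\ge2}\al_j^{-k_j}$ the height inequality gives $\big(|k_1|-\sum_{j\ge2}|k_j|\big)\,h(\al_1)\le h(q)$, but $q$ is whatever the product happens to be and $h(q)$ can itself be of size $\big(\sum_j|k_j|\big)h(\al_1)$, so nothing becomes ``small'' and no contradiction follows; moreover Kronecker's theorem requires height exactly zero together with algebraic integrality, not small height, and the equality case $|k_1|=\sum_{j\ge2}|k_j|$ --- allowed by the hypothesis and precisely the case the paper needs when it invokes the lemma in the proof of Theorem~\ref{thm:irre}(iii), where $|k_1|=|k_2|+|k_3|$ --- is left entirely open, as you yourself note (``where the real work lies''). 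That deferred place-by-place valuation analysis, together with the Galois-transitivity input, is not packaging; it is the entire content of Dixon's theorem.

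A concrete warning sign that an essential idea is missing: your outline nowhere uses $n\ge3$, yet the statement is false for $n=2$, where $\al_1\al_2=\cN(\al_1)\in\Q$ with $k_1=k_2=1$ and $|k_1|\ge|k_2|$. Similarly, the conjugates of $\sqrt2+\sqrt3$ satisfy $\al_1\al_2=-1\in\Q$ with the same exponents, and only the degeneracy of that quartic (the quotient $(\sqrt3-\sqrt2)/(\sqrt2-\sqrt3)=-1$ is a root of unity) keeps it from being a counterexample; so non-degeneracy and $n\ge 3$ must enter the argument in a quantitative, place-by-place way, which your sketch never achieves. As it stands, the proposal is a plausible plan of attack with the decisive step absent; for the purposes of this paper the lemma should simply be cited from \cite{Dixon}, as the authors do.
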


The following result of Loxton and van der Poorten shows that if some (not necessarily conjugate!) algebraic numbers $\alpha_1,\ldots,\alpha_n$ are multiplicatively dependent,
 then one can find a multiplicative dependence relation, where the exponents $k_i$, $i=1,\dots,n$, are  not too large; 
see, for example, \cite[Theorem 3]{Loxton} or \cite[Theorem~1]{Poorten}.

\begin{lemma} 
\label{lem:exponent}
Let $n\geq 2$, and let $\alpha_1,\ldots,\alpha_n$ be multiplicatively dependent non-zero algebraic numbers of degree at most $d$ and height at most $H$. 
Then, there are $k_1,\ldots,k_n \in \Z$, not all zero, and  a positive number $c$, which depends only on $n$ and $d$,  such that
$$
\alpha^{k_1}_1\cdots\alpha^{k_n}_n=1
$$
and
$$
\max_{1\leq i\leq n}|k_i| \le c(\log H)^{n-1}.
$$
\end{lemma}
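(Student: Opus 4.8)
\textbf{Proof proposal for Lemma~\ref{lem:exponent}.}
The plan is to reduce the statement to a counting/pigeonhole argument on heights. Since $\alpha_1,\dots,\alpha_n$ are multiplicatively dependent, the subgroup $\Gamma$ of $\overline{\Q}^*$ generated by them has rank $r\le n-1$. Pass to a basis $\gamma_1,\dots,\gamma_r$ of the free part of $\Gamma$ modulo torsion, so that each $\alpha_i$ can be written as $\zeta_i\gamma_1^{e_{i1}}\cdots\gamma_r^{e_{ir}}$ with $\zeta_i$ a root of unity and $e_{ij}\in\Z$. A nontrivial multiplicative relation $\prod\alpha_i^{k_i}=1$ then corresponds (after absorbing the torsion, which contributes a factor bounded in terms of $n$) to an integer vector $(k_1,\dots,k_n)$ in the kernel of the $r\times n$ integer matrix $E=(e_{ij})^{T}$, together with a congruence condition to kill the roots of unity. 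The key point is to bound the size of the entries $e_{ij}$ in terms of $H$ and the degree $d$, and then to invoke a quantitative version of Siegel's lemma: a nonzero integer matrix with $r\le n-1$ rows, $n$ columns, and entries of absolute value at most $B$ has a nonzero integer kernel vector with entries $\ll B^{r/(n-r)}\ll B^{n-1}$.

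First I would make the height bound on the exponents explicit. Each $\gamma_j$ is an algebraic number of degree $\ll d$, and its height is bounded below by an absolute Dobrowolski/Lehmer-type constant (or one may use that $\Gamma$ is generated inside a number field of degree $\ll d!$, so the nontorsion elements have height bounded away from $1$ by a constant depending only on $d$); meanwhile $H(\gamma_j)\ll H(\alpha_i)$-type inequalities via the structure of $\Gamma$ bound the heights of the $\gamma_j$ from above in terms of $H$. Combining, $\max_{i,j}|e_{ij}|\ll \log H$, where the implied constant depends only on $n$ and $d$. The heart of the matter is this step: controlling a basis of $\Gamma/\mathrm{tors}$ so that both the heights of the basis elements and the coordinates $e_{ij}$ of the $\alpha_i$ are simultaneously polynomially (indeed logarithmically) bounded. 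This is precisely the content of the Loxton--van der Poorten estimate, and it rests on heights-in-subgroups results (the fact that a finitely generated multiplicative group has a basis consisting of elements of height bounded in terms of a generating set).

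Given the bound $|e_{ij}|\ll \log H$, I would apply Siegel's lemma to the matrix $E$ to produce a nonzero $(k_1,\dots,k_n)\in\Z^n$ with $E(k_1,\dots,k_n)^{T}=0$ and $\max_i|k_i|\ll (\log H)^{r/(n-r)}$. Since $r\le n-1$, the exponent $r/(n-r)$ is at most $n-1$, giving $\max_i|k_i|\ll(\log H)^{n-1}$ as required. Finally one must handle the roots of unity $\zeta_i$: the relation $\prod\alpha_i^{k_i}=1$ forces $\prod\zeta_i^{k_i}=1$; since all $\zeta_i$ lie in a cyclotomic field of bounded degree, their common order $w$ is bounded in terms of $d$, so replacing $(k_1,\dots,k_n)$ by $(wk_1,\dots,wk_n)$ (still nonzero, still in the kernel of $E$, and still $\ll(\log H)^{n-1}$ since $w=O(1)$) yields an honest multiplicative dependence $\prod\alpha_i^{wk_i}=1$.

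The main obstacle is the second step: extracting from $\alpha_1,\dots,\alpha_n$ a basis of the ambient multiplicative group whose elements have height bounded in terms of $H$ and $d$, and expressing the $\alpha_i$ in that basis with logarithmically bounded exponents. Everything downstream (Siegel's lemma, the torsion cleanup) is routine linear algebra over $\Z$. Rather than reprove the Loxton--van der Poorten theorem, I would simply cite \cite{Loxton,Poorten} for exactly this conclusion, since the lemma as stated is verbatim their result; the sketch above indicates why the exponent $n-1$ on $\log H$ is the natural one, coming from the worst case $r=n-1$ in Siegel's lemma.
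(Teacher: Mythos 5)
Your proposal is correct and ends up in the same place as the paper: the paper gives no proof of this lemma either, but simply quotes it as the Loxton--van der Poorten theorem (citing \cite{Loxton} and \cite{Poorten}), exactly as you do. Your preliminary sketch (basis of the group modulo torsion, logarithmic bounds on the exponent matrix, Siegel's lemma with $r\le n-1$, torsion cleanup) is a reasonable account of why the exponent $n-1$ arises, and you correctly identify that the genuinely hard step is the height-controlled basis, which is precisely what the cited results supply.
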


Recall that $\Q^*$ is the set of non-zero rational numbers, which is a group under multiplication. 
Given a non-zero algebraic number $\alpha$, let  $\cN(\alpha)$ be the norm of $\alpha$ over $\Q$ (i.e., the product of all its conjugates), and 
let $\Gamma(\alpha)$ be the multiplicative group generated by all the conjugates of $\alpha$. 

Now, we shall prove the following:

\begin{lemma}    \label{lem:multQ}
Let $\alpha$ be a non-zero algebraic number. Then,
\begin{itemize}
\item[(i)]   $\Gamma(\alpha) \cap \Q^* = \{1\}$ if $\cN(\al)=1$ and $-1 \notin \Gamma(\alpha)$;

\item[(ii)]   $\Gamma(\alpha) \cap \Q^* = \{\pm 1\}$ if $|\cN(\al)|=1$ and $-1 \in \Gamma(\alpha)$;

\item[(iii)]   $\Gamma(\alpha) \cap \Q^* = \{g^m \,:\> m \in \Z\}$ if $|\cN(\al)| \ne 1$ and $-1 \notin \Gamma(\alpha)$;

\item[(iv)]   $\Gamma(\alpha) \cap \Q^* = \{\pm g^m \,:\> m \in \Z\}$ if $|\cN(\al)| \ne 1$ and $-1 \in \Gamma(\alpha)$.
\end{itemize}
Here, in parts {\rm (iii)} and {\rm (iv)}, $g \in \Gamma(\alpha) \cap \Q^*$ satisfies 
\begin{equation}\label{ggkk}
|g| = \min \{|\beta| :\,  \beta\in \Gamma(\alpha)\cap \Q^*, |\beta|>1\}. 
\end{equation}
\end{lemma}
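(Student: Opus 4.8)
The plan is to analyze the structure of the abelian group $\Gamma(\alpha) \cap \Q^*$ as a subgroup of $\Q^*$, using the key fact that $\Gamma(\alpha)$ is finitely generated (being generated by the finitely many conjugates of $\alpha$). First I would observe that any subgroup of $\Q^*$ is abelian and torsion-free except possibly for the element $-1$; indeed the only roots of unity in $\Q^*$ are $\pm 1$. So $\Gamma(\alpha) \cap \Q^*$ is the direct product of its torsion part (which is $\{1\}$ or $\{\pm 1\}$, according to whether $-1 \in \Gamma(\alpha)$) and a torsion-free part. The torsion-free part is a finitely generated subgroup of the torsion-free group $\Q^*/\{\pm 1\} \cong \bigoplus_p \Z$ (the free abelian group on the primes), hence it is free abelian of some finite rank $r \geq 0$.

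Next I would pin down the rank. The central tool is the norm map $\cN\colon \Gamma(\alpha) \cap \Q^* \to \Q^*$ composed with the absolute value. For any rational $\beta \in \Gamma(\alpha)$, since all conjugates of $\alpha$ are Galois-conjugate, $\beta$ is fixed by $\Gal$, and one checks that $|\cN(\beta)|$ is a multiplicative function on $\Gamma(\alpha)\cap\Q^*$ that depends only on $|\beta|$ up to a fixed power — more precisely, writing $\beta$ as a word in the conjugates $\alpha_1,\dots,\alpha_n$ and applying all automorphisms, one gets $\cN(\beta) = \pm\,\cN(\alpha)^{k}$ where $k = k_1+\cdots+k_n$ is the exponent sum, while $\beta$ itself (being rational) equals $\pm\beta^{1/n}\cdots$ — the cleaner route is: if $\beta \in \Q^*$ is a product of conjugates of $\alpha$ with exponent sum $k$, then $\beta^n = \cN(\beta)\cdot(\text{unit-type factor})$, forcing $|\beta|^n = |\cN(\alpha)|^{k}$. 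Hence if $|\cN(\alpha)| = 1$ then $|\beta| = 1$ for every rational $\beta \in \Gamma(\alpha)$, so $\Gamma(\alpha)\cap\Q^* \subseteq \{\pm 1\}$, which together with the torsion analysis gives cases (i) and (ii). If $|\cN(\alpha)| \neq 1$, the map $\beta \mapsto |\beta|$ is injective on $(\Gamma(\alpha)\cap\Q^*)/\{\pm 1\}$ (since its values are determined by $k$, and distinct $k$ give distinct $|\beta|$), so the torsion-free rank is at most $1$; and it is exactly $1$ because $\cN(\alpha)$ itself, or rather $\alpha_1\cdots\alpha_n = \pm\cN(\alpha)$, is a rational element of $\Gamma(\alpha)$ of infinite order. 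A rank-one free abelian group is infinite cyclic, generated by its element of smallest absolute value exceeding $1$; this is precisely the $g$ defined by \eqref{ggkk}. Combining with the torsion part yields cases (iii) and (iv).

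The main obstacle I anticipate is the bookkeeping in the norm/absolute-value computation: one must be careful that a rational number expressed as a product of conjugates of $\alpha$ need not be "symmetric," so $\cN(\beta)$ is not literally $\cN(\alpha)^{k}$ on the nose but only up to applying Galois automorphisms and regrouping, and getting the exponent $n$ versus the exponent sum $k$ straight requires attention. A clean way to finesse this: for $\beta \in \Gamma(\alpha) \cap \Q^*$ write $\beta = \prod_j \alpha_j^{e_j}$; since $\beta$ is Galois-invariant, averaging over the Galois group shows $\beta^{|G|} = \prod_\sigma \sigma(\beta) = \cN(\alpha)^{e}$ where $e = \sum_j e_j$ and $G = \Gal(\Q(\alpha_1,\dots,\alpha_n)/\Q)$ acts transitively on the $\alpha_j$ with each appearing $|G|/n$ times — so $|\beta|^{|G|} = |\cN(\alpha)|^{e|G|/n}$, i.e. $|\beta|^n = |\cN(\alpha)|^e$, which is exactly what is needed. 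Once this identity is in hand, the injectivity of $\beta \mapsto |\beta|$ modulo $\pm 1$, the rank computation, and the identification of the generator $g$ are all routine group theory. Finally I would note that the well-definedness of $g$ in \eqref{ggkk} (that the minimum is attained) follows because $\Gamma(\alpha)\cap\Q^*$ is finitely generated, hence discrete in the relevant sense — there is a smallest $|\beta| > 1$.
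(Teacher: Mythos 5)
Your argument is correct, and its first half is exactly the paper's opening move: writing $\beta=\prod_j\alpha_j^{e_j}\in\Gamma(\alpha)\cap\Q^*$, applying every element of the Galois group $G$ of the splitting field and multiplying gives $\beta^{|G|}=\cN(\alpha)^{e|G|/n}$ with $e=\sum_j e_j$ (each conjugate is hit $|G|/n$ times), hence $|\beta|^n=|\cN(\alpha)|^e$; this settles (i) and (ii) just as in the paper. Where you genuinely diverge is the case $|\cN(\alpha)|\ne 1$: the paper proceeds by hand, introducing the smallest rational $g_0>1$ with $|\cN(\alpha)|=g_0^a$, comparing prime factorizations and gcds to show every $|\beta|$ is an integer power of $g_0$, and then running a division-algorithm argument to show that the $g$ of \eqref{ggkk} generates everything up to sign. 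You replace this second half by structural group theory: the identity forces $|\beta|$ into the infinite cyclic group $\{|\cN(\alpha)|^{e/n}: e\in\Z\}$, so $\beta\mapsto e$ embeds $(\Gamma(\alpha)\cap\Q^*)/\{\pm 1\}$ into $\Z$, making it cyclic, and it is nontrivial because $\cN(\alpha)=\alpha_1\cdots\alpha_n$ (equality, with the paper's definition of the norm) lies in $\Gamma(\alpha)\cap\Q^*$; combining with the torsion subgroup $\{1\}$ or $\{\pm 1\}$ gives (iii) and (iv), and the generator is identified with the $g$ of \eqref{ggkk} because the absolute values form the cyclic group $\{|g|^m\}$. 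This buys a cleaner proof: the paper's gcd computation is your observation that the image of $|\cdot|$ lands in a cyclic group, and its division-algorithm step is your appeal to the fact that subgroups of $\Z$ are cyclic.

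Two points of wording should be repaired. First, injectivity of $\beta\mapsto|\beta|$ modulo $\pm 1$ holds for every subgroup of $\Q^*$ and by itself says nothing about the rank (the group generated by $2$ and $3$ has rank $2$); what actually bounds the rank is that the image of $|\cdot|$ is contained in the cyclic group $\{|\cN(\alpha)|^{e/n}: e\in\Z\}$, so state that as the reason. Second, the closing claim that $\Gamma(\alpha)\cap\Q^*$ is ``finitely generated, hence discrete'' is false as a general principle: the subgroup of $\R_{>0}$ generated by $2$ and $3$ is finitely generated and dense. The attainment of the minimum in \eqref{ggkk} should instead be deduced from the identity $|\beta|^n=|\cN(\alpha)|^e$, which confines the values $|\beta|$ to a discrete set once $|\cN(\alpha)|\ne 1$ (equivalently, from the cyclicity you have already established). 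With these two rewordings the proof is complete.
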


Note that the case $\cN(\alpha)=-1$ and $-1 \notin \Gamma(\alpha)$
in (i) is impossible, since $\cN(\alpha) \in \Gamma(\alpha) \cap \Q^*$.

\begin{proof} 
Assume that the algebraic number $\alpha$ is of degree $n$ over $\Q$ with conjugates $\alpha_1=\alpha, \alpha_2, \ldots, \alpha_n$. 
Denote $d=[\Q(\alpha_1,\ldots,\alpha_n):\Q]$.  
Let $g_0 \geq 1$ be the minimal positive rational number such that $|\cN(\alpha)|=g_0^a$ for some non-zero integer $a$. 
Then, $g_0=1$ if and only if $|\cN(\alpha)|=1$. Otherwise, there exist (pairwise distinct) prime numbers $p_1,\ldots,p_m$  and non-zero integers $r_1,\ldots,r_m$ such that 
$$
g_0=p_1^{r_1} \cdots p_m^{r_m}. 
$$

For a non-zero integer vector $\bk=(k_1,\ldots,k_n) \in \Z^n$, assume that 
$$
\beta(\bk) = \alpha_1^{k_1} \cdots \alpha_n^{k_n} \in \Gamma(\alpha) \cap \Q^*.
$$
Then, applying all $d$ automorphisms of $\Q(\alpha_1,\ldots,\alpha_n)$ over $\Q$ to $\beta(\bk)$ and multiplying all the obtained equalities, we deduce that 
$$
\beta(\bk)^{d} = (\alpha_1 \cdots \alpha_n)^{d(k_1+\cdots +k_n)/n} = \cN(\alpha)^{d(k_1+\cdots +k_n)/n} = \pm g_0^{ad(k_1+\cdots +k_n)/n}. 
$$
Now, $g_0=1$ implies that $\beta(\bk)=\pm 1$. This proves parts (i)
and (ii). 

From now on assume that $|\cN(\alpha)| \ne 1$, and so $g_0>1$. Suppose also that $\beta(\bk) \ne \pm 1$. Evidently,
we can assume that $a(k_1+\cdots +k_n)>0$, because otherwise we can replace $\beta(\bk)$ by $\beta(\bk)^{-1}$. 
Let us denote 
$$
q=ad(k_1+\cdots +k_n)/n,
$$
 which is in fact a positive integer (because $n \mid d$).  
Then, for the prime numbers $p_1,\ldots,p_m$ defined above and some non-zero integers $s_1,\ldots,s_m$ we must have 
$$
|\beta(\bk)| = p_1^{s_1} \cdots p_m^{s_m}, 
$$
and therefore
$$
|\beta(\bk)|^{d}=p_1^{ds_1} \cdots p_m^{ds_m} = p_1^{qr_1} \cdots p_m^{qr_m}=g_0^q.
$$
In particular, we obtain
\begin{equation}   \label{eq:rs}
ds_i = qr_i, \quad i=1,2,\ldots, m. 
\end{equation}
For each $1\le i \le m$, let $t_i = \gcd(r_i,s_i)$ if $r_i>0$ (that is, $s_i>0$), and $t_i = -\gcd(r_i,s_i)$ otherwise.
So, we always have $r_i/t_i>0$ and $s_i/t_i>0$. 

Thus, for each $1\le i \le m$, by \eqref{eq:rs} we deduce that
$$
\frac{r_i}{t_i} = \frac{|r_i|}{\gcd(r_i,s_i)} = \frac{|dr_i|}{\gcd(dr_i,ds_i)} = \frac{|dr_i|}{\gcd(dr_i,qr_i)} = \frac{d}{\gcd(d,q)}, 
$$
and similarly 
$$
\frac{s_i}{t_i} = \frac{q}{\gcd(d,q)}. 
$$
Therefore, putting $u=p_1^{t_1}\cdots p_m^{t_m}$ we obtain
$$
|\beta(\bk)| = u^{q/\gcd(d,q)}=g_0^{q/d}.$$  
This yields
$g_0 = u^{d/\gcd(d,q)}$.
In view of the choice of $g_0$, we must have $g_0=u$, and thus 
$$
|\beta(\bk)| = g_0^{q/\gcd(d,q)}. 
$$
So, we have proved that for any $\beta \in \Gamma(\alpha) \cap \Q^*$
which is not $\pm 1$, we have 
$|\beta| = g_0^b$ for some integer $b$. In particular this yields that
\begin{equation}\label{jins2}
\Gamma(\alpha) \cap \Q^* \subseteq \{\pm g_0^m \, :\> m \in \Z\}.
\end{equation}

Let $g \in \Gamma(\alpha) \cap \Q^*$ be defined by \eqref{ggkk}. We claim that
\begin{equation}\label{jins1}
\{g^m \>:\> m \in \Z\>\}  \subseteq \Gamma(\alpha) \cap \Q^*
\subseteq  \{\pm g^m \>:\>m \in \Z\>\}. 
\end{equation}
Indeed, it suffices to show that for any $\beta \in \Gamma(\alpha) \cap \Q^*$ 
its modulus $|\beta|$ is an integer power of $|g|$. This is clear for $\beta = \pm 1$.
Suppose that $|\beta| \ne 1$. Then, in view of \eqref{jins2} we have $g=\pm g_0^b$ for some positive integer $b$. 
Suppose that there exists $\beta \in \Gamma(\alpha) \cap \Q^*$ such that $\beta \ne \pm 1$ and $|\be|$ is not an integer power of $|g|$. 
Still, we have $\beta=\pm g_0^c$ for some integer $c \ne 0$. 
We can assume that $c>0$, otherwise we replace $\beta$ by $\beta^{-1}$.
By the division algorithm, we write $c=bw+v$ for some integers $w,v \geq 0$. 
By the choices of $g$ and $\beta$, we must have $w>0$ and $0< v < b$. 
Then, 
$
 \pm g_0^v = \pm g_0^{c-bw} = \pm \be/g^{w}, 
$ 
so either $g_0^v$ or $-g_0^v$ is in $\Gamma(\alpha)$. 
However, $1<g_0^v < g_0^b=|g|$, which contradicts the choice of 
$g$.
This completes the proof of \eqref{jins1}.  

To complete the proof of (iii) we assume that $- 1 \notin \Gamma(\alpha)$. Then, there does not exist $m \in \Z$ such that both $g^m$
and $-g^m$ are in $\Gamma(\alpha)$, since otherwise their quotient $-1$ is in $\Gamma(\alpha)$. This, in view of the left inclusion in \eqref{jins1} completes the proof of (iii).

Evidently, in case $-1 \in \Gamma(\alpha)$ in view of $g^m \in \Gamma(\alpha)$ we also have $-g^m \in \Gamma(\alpha)$. By \eqref{jins1}, this completes the proof of (iv). 
\end{proof}

Recall that $\N$ is the set of natural numbers (that is, positive integers). 
Clearly, 
by $\cN(\alpha) \in \Gamma(\alpha) \cap \Q^*$,
Lemma~\ref{lem:multQ} (iii), (iv)
 and \eqref{ggkk}, we must have $|g|^k=|\cN(\alpha)|$ for some $k \in \N$. Hence,
Lemma~\ref{lem:multQ} implies the following:

\begin{corollary}    \label{lem:multN}
Let $\alpha$ be a non-zero algebraic number.  
Then, we have $\Gamma(\alpha) \cap \N \ne  \{1\}$ if and only if either $\cN(\alpha)$ or $1/\cN(\alpha)$ is an integer not equal to $\pm 1$. 
\end{corollary}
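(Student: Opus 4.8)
The plan is to prove the two implications of the equivalence separately, using the structure theorem Lemma~\ref{lem:multQ} only for the forward direction and nothing beyond the definition of the norm for the reverse one. I would first record two trivial observations: $1 \in \Gamma(\alpha) \cap \N$ always, so ``$\Gamma(\alpha) \cap \N \ne \{1\}$'' means exactly that some integer $a \ge 2$ lies in $\Gamma(\alpha)$; and, since the conjugates of $1/\alpha$ are the reciprocals of those of $\alpha$, one has $\Gamma(1/\alpha) = \Gamma(\alpha)$ and $\cN(1/\alpha) = 1/\cN(\alpha)$, so both sides of the claimed equivalence are invariant under $\alpha \mapsto 1/\alpha$. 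In the forward direction I will exploit this to assume $|\cN(\alpha)| \ge 1$.

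For the ``if'' direction I would argue as follows. Suppose $\cN(\alpha)$ is an integer $\ne \pm 1$ (the case where $1/\cN(\alpha)$ is such an integer follows by the symmetry just noted). Then $|\cN(\alpha)| \ge 2$, and since $\cN(\alpha) = \alpha_1 \cdots \alpha_n \in \Gamma(\alpha)$ and $\Gamma(\alpha)$ is a group, $\cN(\alpha)^2 \in \Gamma(\alpha)$ as well; but $\cN(\alpha)^2$ is a positive integer $\ge 4$, so it is an element of $\Gamma(\alpha) \cap \N$ different from $1$.

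For the ``only if'' direction I would start from an integer $a \ge 2$ in $\Gamma(\alpha)$. Then $a \in \Gamma(\alpha) \cap \Q^*$ has absolute value $> 1$, which excludes cases (i) and (ii) of Lemma~\ref{lem:multQ} (where $\Gamma(\alpha) \cap \Q^* \subseteq \{\pm 1\}$); so we are in case (iii) or (iv), in particular $|\cN(\alpha)| \ne 1$. The leverage of Lemma~\ref{lem:multQ} is precisely that it forces every element of $\Gamma(\alpha) \cap \Q^*$ to have absolute value an integer power of one common rational $|g| > 1$. Hence, using the normalization $|\cN(\alpha)| \ge 1$, one gets $|\cN(\alpha)| = |g|^{k}$ for some $k \in \N$ (as already remarked just before the statement), and also $a = |g|^{m}$ for some $m \in \N$; eliminating $|g|$ gives $|\cN(\alpha)| = a^{k/m}$.

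The final step --- which I expect to be the only one needing an idea rather than bookkeeping --- is to observe that the left side $|\cN(\alpha)|$ is a positive \emph{rational}, while the right side is $a^{k/m}$ with $a \ge 2$ an integer; writing $k/m$ in lowest terms, an elementary argument (if $a^{k/m} = p/q$ with $\gcd(p,q) = 1$, then $a^{k}q^{m} = p^{m}$, forcing $q = 1$) shows this common value is a positive integer, necessarily $> 1$ because $|g| > 1$. Thus $\cN(\alpha) = \pm a^{k/m}$ is an integer $\ne \pm 1$, as required. The rest of the work is just identifying the applicable case of Lemma~\ref{lem:multQ} and the sign-and-squaring care needed to remain inside $\N$.
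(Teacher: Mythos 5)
Your proof is correct and takes essentially the same route as the paper: the paper likewise deduces the corollary directly from Lemma~\ref{lem:multQ}\,(iii),(iv) and \eqref{ggkk}, via the fact that $\cN(\alpha)\in\Gamma(\alpha)\cap\Q^*$ forces $|\cN(\alpha)|$ to be a power of $|g|$, while the converse is immediate from $\cN(\alpha)\in\Gamma(\alpha)$. You simply spell out the details the paper dismisses as ``clear'' (the reduction $\alpha\mapsto 1/\alpha$ and the elementary step showing that $a=|g|^m$ with an integer $a\ge 2$ forces $|\cN(\alpha)|=|g|^k$ to be an integer or the reciprocal of one), which is a faithful filling-in rather than a different argument.
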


\section{Proofs for the monic case}

\subsection{Proof of Theorem \ref{thm:irre}}   \label{sec:irre}

(i) 
Note that, by Lemma \ref{lem:PnH}, for any $n \ge 2$ we have 
$$
\# I_n(H) \gg H^{n-1}. 
$$
In all what follows we will prove the upper bound. 

Let $\al_1,\dots,\al_n$
be the roots of the monic irreducible polynomial 
$$
f=X^n+a_{n-1}X^{n-1}+\cdots +a_0 \in \Z[X],
$$ 
where $|a_0|,\dots,|a_{n-1}| \leq H$.  
Set
\begin{equation}\label{jons11}
K= \lfloor c (\log H)^{n-1}\rfloor, 
\end{equation}
where $c$ is the constant defined in Lemma \ref{lem:exponent} (with $d=n!$). Note that the height $H(\alpha_i)$ of each $\alpha_i$ is $O(H^{1/n})$, by \eqref{eq:MW}. 
Hence, $c$ depends only on $n$. 

Let ${\mathcal K}$ be the set of all the integer vectors $\bk =(k_1,\dots,k_n)$ with $0 \leq k_i \leq K$, where $i=1,\ldots, n$. 
Clearly, $\#{\mathcal K}=(K+1)^n$. 
For each $\bk=(k_1,\dots,k_n)\in {\mathcal K}$, we denote 
$$
\beta(\bk)=\al_1^{k_1} \cdots \al_{n}^{k_n}. 
$$ 

By Lemma~\ref{lem:exponent}, we see that $f\in I_n(H)$ (that is, the algebraic integers $\al_1,\dots,\al_n$ are multiplicatively dependent) 
if and only if $\be({\bf k}) =\be({\bf k}')$ for some ${\bf k} \ne {\bf k}' \in {\mathcal K}$. 

Now, consider the monic  polynomial (in $X$)
$$
F(X) = \prod_{{\bf k} \in {\mathcal K}} (X-\be({\bf k})) \in \Z[X]
$$
of degree $\#{\mathcal K}=(K+1)^{n}$. 
Its discriminant 
\begin{equation}\label{jons1}
\Delta(a_0,\dots,a_{n-1})=\pm \prod_{{\bf k} \ne {\bf k'} \in {\mathcal K}} (\be({\bf k})-\be({\bf k'}))
\end{equation}
 is a polynomial in $\Z[a_0,\dots,a_{n-1}]$, because it is a symmetric polynomial in $\alpha_1, \ldots, \alpha_n$.  
Note that the coefficients of $F$ are symmetric polynomials in terms of $\alpha_1, \ldots, \alpha_n$ with total degree at most $nK(K+1)^n/2$. 
By the fundamental theorem for symmetric polynomials and viewing $a_0,\ldots,a_{n-1}$ as elementary symmetric polynomials in $\alpha_1, \ldots, \alpha_n$, 
the coefficients of $F$ are also polynomials in $a_0,\ldots,a_{n-1}$ of total degree at most $nK(K+1)^n/2$. 

It is a well-known fact that $\Delta(a_0,\dots,a_{n-1})$ is a homogeneous polynomial in the coefficients of $F$ of total degree $2\deg F - 2$ (that is, $2(K+1)^n-2$). 
Hence, as a polynomial in $a_0,\ldots,a_{n-1}$, the total degree of the polynomial
$\Delta(a_0,\dots,a_{n-1})$ is at most 
\begin{equation}   \label{eq:degD}
(2(K+1)^n-2) \cdot nK(K+1)^n/2 < n(K+1)^{2n+1}.
\end{equation}

Notice that $f \in I_n(H)$ if and only if $\Delta(a_0,\dots,a_{n-1})=0$. 
Now, by Lemma~\ref{polyno}, \eqref{eq:degD} and the definition of $K$ in \eqref{jons11}, it follows that the number of vectors
$(a_0,\dots,a_{n-1}) \in \Z^n$ satisfying $|a_0|,\dots,|a_{n-1}| \leq H$
and $\Delta(a_0,\dots,a_{n-1})=0$, does not exceed 
\begin{equation}  \label{eq:general}
\begin{split}
(2H+1)^{n-1} n \deg \Delta & <
n^2 (K+1)^{2n+1} (2H+1)^{n-1} \\
& \le n^2 (c (\log H)^{n-1}+1)^{2n+1}(2H+1)^{n-1} \\
& \ll H^{n-1} (\log H)^{2n^2-n-1}. 
\end{split}
\end{equation}
This completes the proof of (i).

(ii)
For any odd prime $p$ the claimed 
asymptotic formula  follows directly from Lemmas \ref{lem:PnH} and \ref{lem:mure0}. 

For $n=2$, let  $f=X^2 + a_1X +a_0 \in \Z[X]$, $a_0\ne \pm 1$, be an irreducible polynomial, with roots $\alpha_1$ and $\alpha_2$. 
Assume that $\al_1^{k_1} \al_2^{k_2}=1$ for some integers $k_1,k_2$, not both zero. 
Applying the Galois automorphism which swaps $\al_1$ with $\al_2$ to the above multiplicative relation, we obtain
$$
(\alpha_1\alpha_2)^{k_1+k_2} = a_0^{k_1+k_2} =1. 
$$ 
Since $a_0 \ne \pm 1$, we must have $k_1+k_2 = 0$. 
So, $f$ is in fact degenerate. 
Then, the desired asymptotic formula for $n=2$ follows from Lemma \ref{lem:PnH} and \cite[Theorem 7]{dusa1}.

(iii) 
We finally consider the case $n=4$.  
Let $\alpha_1,\alpha_2,\alpha_3,\alpha_4$ be the roots  of a monic irreducible polynomial $f \in \Z[X]$. 
Assume that 
\begin{equation}\label{mkij1}
\alpha_1^{k_1} \alpha_2^{k_2} \alpha_3^{k_3} \alpha_4^{k_4}=1
\end{equation}
 with some $k_1,k_2,k_3,k_4 \in \Z$, not all zero (that is, $f\in I_n(H)$). 
As explained above in order
to prove the asymptotic formula,  we can assume that $f(0) \ne \pm 1$ and that $f$ is non-degenerate.

First, applying all the automorphisms in the Galois group 
$$
G={\rm Gal}(\Q(\alpha_1,\alpha_2,\alpha_3,\alpha_4)/\Q)
$$ 
to \eqref{mkij1} and then multiplying all the obtained equalities we get
$$
\cN(\alpha_1)^{\#G(k_1+k_2+k_3+k_4)/4}=1.
$$
Hence, in view of $\cN(\alpha_1)=f(0) \ne \pm 1$ we deduce that 
\begin{equation}\label{mkij10}
k_1+k_2+k_3+k_4=0.
\end{equation}

Suppose now that there are exactly two $k_i$ not equal to zero. Then, without loss of generality we may assume that
$k_1 \ne 0$, $k_2=-k_1$, and $k_3=k_4=0$. However, $\alpha_1^{k_1}\alpha_2^{-k_1}=1$ means that $f$ is degenerate, which is not the case. 

Next, assume that exactly three $k_i$ are non-zero, say $k_1,k_2,k_3 \ne 0$. Then, in view of $k_1+k_2+k_3=0$, the modulus of the largest $|k_i|$, $i=1,2,3$, equals the sum of the moduli of the other two, for example, $|k_1|=|k_2|+|k_3|$.  However, for such $k_i$, since $f$ is non-degenerate, the equality $\alpha_1^{k_1}\alpha_2^{k_2}\alpha_3^{k_3}=1$ is impossible, by Lemma~\ref{lem:mure11}.

Finally, assume that all four $k_i$, $i=1,2,3,4$, are non-zero. 
By the same argument as above, we cannot have
$|k_i|=\sum_{j \ne i}|k_j|$, so 
exactly two of the four $k_i$'s are positive and the other two are negative. 
Without restriction of generality we may assume that $0<k_1 \ne -k_2$.  
Take an automorphism $\sigma \in G$ that maps $\al_2 \mapsto \al_1$.
Putting $\alpha_i=\sigma(\alpha_1)$, $\alpha_j=\sigma(\alpha_3)$ and $\alpha_k=\sigma(\alpha_4)$, where $\{i,j,k\}=\{2,3,4\}$, we derive that
$\alpha_1^{k_2} \alpha_i^{k_1} \alpha_j^{k_3} \alpha_k^{k_4}=1$. Combining this with \eqref{mkij1} we obtain 
\begin{equation}\label{blyns}
\alpha_2^{k_2^2} \alpha_3^{k_2k_3}\alpha_4^{k_2k_4} =\alpha_1^{-k_1k_2} = \alpha_i^{k_1^2}\alpha_j^{k_1k_3}\alpha_k^{k_1k_4}.
\end{equation}
Here, in view of $\{i,j,k\}=\{2,3,4\}$ and \eqref{mkij10}, the sums of the exponents in each side of \eqref{blyns} are all equal to $-k_1k_2$. 
Now, dividing the left hand side of \eqref{blyns} by its right hand side, we obtain 
another multiplicative relation 
$$
\alpha_2^{q_2}\alpha_3^{q_3}\alpha_4^{q_4}=1,
$$
where $q_2,q_3,q_4 \in \Z$ are such that $q_2+q_3+q_4=0$. 
However, as we have already showed above, this is impossible if at least one $q_j$, $j=2,3,4$, is non-zero. 

It remains to consider the case when $q_2=q_3=q_4=0$.
This happens precisely when the list of exponents $k_2^2,k_2k_3,k_2k_4$ on the left hand side of \eqref{blyns}
is a permutation of the list  of exponents on the right hand side $k_1^2,k_1k_3,k_1k_4$. 
In particular, this implies that the products of those exponents, i.e., $k_2^4k_3k_4$ and $k_1^4k_3k_4$, respectively, must be equal. It follows that $k_1^4=k_2^4$, and hence 
$k_1 = k_2$ (because we have assumed $k_1 \ne -k_2$).  

By the same argument, for any pair $(i,j)$, where $1 \leq i<j \leq 4$,
we have either $k_i=-k_j$ or $k_i=k_j$.   
Hence, $k_1^2=k_2^2=k_3^2=k_4^2$. Consequently, without restriction of generality we may assume that
$k_1=k_2=k>0$ and $k_3=k_4=-k$. 
Then, \eqref{mkij1} reduces to
\begin{equation}\label{kimkim1}
\alpha_1^k\alpha_2^k\alpha_3^{-k}\alpha_4^{-k} = 1, 
\end{equation}
where $k$ is a positive integer. 
This yields
\begin{equation}\label{kimkim}
\alpha_3\alpha_4=\zeta \alpha_1 \alpha_2, 
\end{equation}
where $\zeta$ is a root of unity. 

Since the polynomial $f$ is non-degenerate, from \eqref{kimkim} it is straightforward to see that
any automorphism $\sigma \in G$ must map the set $\{\alpha_1,\alpha_2\}$ either to itself or the set $\{\alpha_3,\alpha_4\}$.  
So, the conjugates of $\alpha_1\alpha_2$ are $\alpha_1\alpha_2$ and $\alpha_3\alpha_4$. 
That is, the field $\Q(\alpha_1\alpha_2)=\Q(\alpha_3\alpha_4)$ is of at most degree two over $\Q$. 
Hence, $\zeta$ is of  degree at most two over $\Q$. 
Thus, in \eqref{kimkim1} we can always choose $k=12$. 

Then, for $n=4$ we can select in \eqref{jons11} the absolute constant $K=12$ (instead of $K= \lfloor c (\log H)^3\rfloor$). 
So, with $n=4$ and $K=12$ in \eqref{eq:general} we get the upper bound $O(H^3)$ for $\# I_4(H)$.

\subsection{Proof of Theorem \ref{thm:reducible}}

(i) 
First, it is easy to see that the number of polynomials in $R_2(H)$ which are divisible by $X+1$ or $X-1$ 
is equal to $4H + O(1)$. 

Note that according to our definition, the polynomials of the form $X(X+b)$, where
$b \in \Z \setminus \{\pm 1\}$, are not contained in $R_2(H)$.   
For polynomials in $R_2(H)$ of the form $(X+b_0)(X+b_1)$, where $b_0,b_1\in \Z$ are not equal to $\pm 1$ and $0$, 
since $b_0$ and $b_1$ are multiplicatively dependent, there exists a positive integer $a>1$ such that $|b_0|=a^k, |b_1|=a^m$ 
for some positive integers $k,m$ with $k+m \le \log_a H$. Since $k+m \ge 2$, we must have $a\le \sqrt{H}$. 
Then, in view of $\sum_{k =2}^{T} 1/(\log k)^2=O(T/(\log T)^2)$, the number of these polynomials is at most 
$$
O\Big( \sum_{a=2}^{\sqrt{H}} \sum_{k=1}^{\log_a H} ( \log_a H  -k)  \Big) = O\Big( (\log H)^2 \sum_{a=2}^{\sqrt{H}} \frac{1}{(\log a)^2}\Big)=O\Big(\sqrt{H}\Big). 
$$
Hence, we obtain 
$$
\# R_2(H) = 4H + O(H^{1/2}),
$$
as claimed.

(ii)
By Lemma \ref{lem:QnH}, the number of polynomials in $R_3(H)$ which are divisible by $X+1$ or $X-1$ is equal to 
$$
2\nu_3 H^2 + O(H). 
$$

For any polynomial $f\in R_3(H)$ of the form $(X+b_0)(X + b_1)(X+b_2)$ with $b_0,b_1,b_2\in \Z$ not equal to $\pm 1$ and $0$, 
since $b_0,b_1,b_2$ are multiplicatively dependent, for each prime factor $p$ of $b_0b_1b_2$ we have $p^2 \mid b_0b_1b_2$.   
Let $a$ be the positive integer such that $a^2$ is the maximal square divisor of $|b_0b_1b_2|$. 
So, $|b_0b_1b_2|$ is a divisor of $a^3$. 
Observing that $|b_0b_1b_2| \le H$, we have $a \le \sqrt{H}$. 
Recall that for a non-zero integer $m$, $\cD(m)$ is the set of its positive divisors.  
Then,  we see that the number of such polynomials $f$ is at most 
\begin{align*}
O \Big( \sum_{a=1}^{\sqrt{H}} \sum_{b_0,b_1,b_2 \in \cD(a^3)} 1 \Big) 
&= O \Big( \sum_{a=1}^{\sqrt{H}} \# \cD(a^3)^3 \Big)
= O \Big( \sum_{a=1}^{\sqrt{H}} \# \cD(a)^9 \Big) \\&=O(H^{1/2}(\log H)^{511}), 
\end{align*}
where we used the bound
\begin{equation}   \label{eq:PnPn}
\sum_{k \leq T} \# \cD(k)^s \asymp T (\log T)^{2^s-1}
\end{equation}
for $s=9$ due to Wilson \cite{Wilson} (for a generalization see \cite{Delmer} and also \cite{Luca}).

For any polynomial $f\in R_3(H)$ of the form $(X+b_0)(X^2+c_1X+c_0)$, where $b_0 \ne \pm 1$ and $X^2+c_1X+c_0$ irreducible, 
either the roots of $X^2+c_1X+c_0$ are multiplicatively dependent, or $b_0$ and $c_0$  (where $b_0, c_0 \notin \{0,\pm 1\}$) are multiplicatively dependent, by Lemma \ref{lem:multQ}. 
In the first case, by Theorem \ref{thm:irre} (ii),  the number of such polynomials $f$ is at most 
$$
O(\sum_{b_0=2}^{H} H/b_0) = O(H\log H). 
$$
In the second case, there exists a positive integer $a>1$ such that $|b_0|=a^k, |c_0|=a^m$ 
for some positive integers $k,m$ with $k+m \le \log_a H$, where $a \le \sqrt{H}$ in view of $a^2 \le a^{k+m} =|b_0c_0| \le H$.
Clearly, the number of such polynomials $f$ is at most 
\begin{align*}
O\Big( \sum_{a=2}^{\sqrt{H}} \sum_{k=1}^{\log_a H} (\log_a H -k) H/a^k \Big) 
& = O\Big(H \log H \sum_{a=2}^{\sqrt{H}}1 /(a\log a) \Big) \\
& = O(H\log H \log\log H). 
\end{align*}

Collecting the above estimates, we obtain 
$$
\# R_3(H) = 2\nu_3 H^2 + O(H\log H \log\log H). 
$$

(iii) 
Now, assume that $n\ge 4$. 
By Lemma \ref{lem:reducible}, we only need to consider polynomials in $R_n(H)$ which have a linear factor. 
First, by Lemma \ref{lem:QnH} the number of polynomials in $R_n(H)$ which are divisible by $X+1$ or $X-1$ is equal to 
$$
2\nu_n H^{n-1} + O(H^{n-2}). 
$$

For any polynomial $f\in R_n(H)$ of the form $(X+b)g$ with irreducible $g\in \Z[X]$ and $b \ne \pm 1$, we have that 
either the roots of $g$ are multiplicatively dependent, or $b$ and $g(0)$  (both $b$ and $g(0)$ are not equal to $0, \pm 1$) are multiplicatively dependent, by Lemma \ref{lem:multQ}. 
In the first case, by Theorem \ref{thm:irre} (i), the number of such polynomials $f$ is at most 
$$
O\Big( \sum_{b=2}^{H} (H/b)^{n-2} (\log(H/b))^{2(n-1)^2-(n-1)-1} \Big) = O(H^{n-2}(\log H)^{2n^2-5n+2} ), 
$$ 
where the factor $(\log H)^{2n^2-5n+2}$ can be removed when $n-1$ is a prime or $n=5$ (see Theorem \ref{thm:irre} (ii) and (iii)). 
In the second case, since there exists a positive integer $a>1$ such that $|b|=a^k, |g(0)|=a^m$ 
for some positive integers $k,m$ with $k+m \le \log_a H$, the number of such polynomials $f$ is at most 
\begin{align*}
O\Big( \sum_{a=2}^{\sqrt{H}} \sum_{k=1}^{\log_a H} (\log_a H -k) (H/a^k)^{n-2} \Big) 
& = O\Big(H^{n-2} \log H \sum_{a=2}^{\sqrt{H}} a^{2-n} \Big) 
 \\&= O(H^{n-2}\log H). 
\end{align*}

Collecting the above estimates, we obtain
$$
\# R_n(H) = 2\nu_n H^{n-1} + O(H^{n-2}(\log H)^{2n^2-5n+2} ), 
$$
where the factor $(\log H)^{2n^2-5n+2}$ can be replaced by $\log H$ when $n-1$ is a prime or $n=5$.

\section{Proofs for the non-monic case}

\subsection{Proof of Theorem \ref{thm:irre2}} 

(i)
Firstly, note that the set $P_n^*(H)$ defined in Lemma \ref{lem:PnH*} is contained in $I_n^*(H)$. 
So, it suffices to show the upper bound. 
For a polynomial 
$$
f=a_nX^n+a_{n-1}X^{n-1}+\cdots +a_0 \in I_n^*(H),
$$ 
let 
$$
g = X^n + \frac{a_{n-1}}{a_n} X^{n-1} + \cdots + \frac{a_0}{a_n}. 
$$
Then, applying the same arguments as in Section \ref{sec:irre} to the polynomial $g$ (with the same notation as there), 
we deduce that the total degree of the polynomial
$\Delta(a_0/a_n,\dots,a_{n-1}/a_n)$ (in the variables $a_0/a_n,\dots,a_{n-1}/a_n$) is also at most 
\begin{equation*}   
(2(K+1)^n-2) \cdot nK(K+1)^n/2 < n(K+1)^{2n+1}.
\end{equation*}
So, as a polynomial in $a_0,\ldots,a_{n-1},a_n$, the total degree of the polynomial
$a_n^{n(K+1)^{2n+1}}\Delta(a_0/a_n,\dots,a_{n-1}/a_n)$ is at most $2n(K+1)^{2n+1}$.

Note that $f \in I_n^*(H)$ if and only if 
$$
a_n^{n(K+1)^{2n+1}}\Delta(a_0/a_n,\dots,a_{n-1}/a_n)=0.
$$
Now, by Lemma~\ref{polyno},  it follows that the number of polynomials in $I_n^*(H)$ is at most 
\begin{equation}   \label{eq:general222}
(n+1) \cdot  2n(K+1)^{2n+1} (2H+1)^n.
\end{equation}
By the definition of $K$ in \eqref{jons11}, this is not greater than 
\begin{equation}   \label{eq:general2}
2(n+1)^2(c (\log H)^{n-1}+1)^{2n+1}(2H+1)^n \ll H^n (\log H)^{2n^2-n-1}. 
\end{equation}
This completes the proof of (i).

(ii)
For an odd prime $p$, the claimed 
asymptotic formula  follows directly from  Lemmas \ref{lem:PnH*} and \ref{lem:mure0}. 

Next, consider an irreducible polynomial $f=a_2X^2 + a_1X +a_0 \in I_2^*(H)$ with $a_0\ne \pm a_2$. 
As in the monic case, $f$ is in fact degenerate. 
Let $\alpha_1,\alpha_2$ be the roots of $f$. 
Since $\alpha_1/\alpha_2$ is of at most degree two over $\Q$,  we must have $\alpha_1/\alpha_2 =-1, \pm \sqrt{-1}, (1\pm \sqrt{-3})/2$ 
or $(-1 \pm \sqrt{-3})/2$.  

If $\alpha_1/\alpha_2 =-1$, we have $\alpha_1+\alpha_2=0$, and so $a_1=0$. 
So, $f$ is of the form $a_2X^2+a_0$. 
Since $f$ is irreducible, we exactly need to exclude polynomials of the form $ca^2X^2-cb^2$ with $a,b,c\in \Z$. 
It is easy to see that the number of such polynomials $ca^2X^2-cb^2$ with $|ca^2| \le H, |cb^2| \le H$ is at most 
$$
O \Big( \sum_{c=1}^{H}(H/c)^{1/2} \cdot (H/c)^{1/2} \Big) = O(H\log H). 
$$
So, the number of polynomials $f\in I_2^*(H)$ of the form $a_2X^2 + a_0$ is equal to 
$$
4H^2 + O(H\log H). 
$$

In case $\alpha_1/\alpha_2 = \pm \sqrt{-1}$ we find that $\alpha_1^2+\alpha_2^2=0$. 
Observing that $\alpha_1+\alpha_2=-a_1/a_2$ and $\alpha_1\alpha_2=a_0/a_2$,  we obtain 
$$
\alpha_1^2+\alpha_2^2 = (a_1/a_2)^2 - 2a_0/a_2 = 0. 
$$
Hence, $a_1^2 = 2a_0a_2$. 
Let $b$ be the positive integer such that $b^2$ is the maximal square divisor of $2|a_0|$. 
We write $2|a_0|=b^2c$, where $c$ is square-free. 
Since $a_1^2 = b^2c|a_2|$, the integer $c|a_2|$ must be a perfect square. As $c$ divides $|a_2|$, the integer 
$|a_2|/c$ is also a perfect square. 
So, we can write $|a_2| = c d^2$, where $d \le \sqrt{H/c}$. 
Since $|a_0| \le H$, we have $b \le \sqrt{2H}$. 
If we fix $b,c,d$, then $a_0,a_1,a_2$ are also fixed up to a sign. 
Thus, the number of corresponding polynomials $f$ is at most 
$$
O \Big( \sum_{b=1}^{\sqrt{2H}} \sum_{c=1}^{2H/b^2} \sqrt{H/c} \Big) 
=O \Big( H\sum_{b=1}^{\sqrt{2H}} 1/b \Big)= O(H\log H).  
$$

For $\alpha_1/\alpha_2 = (1\pm \sqrt{-3})/2$, we find that $(\alpha_1/\alpha_2)^2 - \alpha_1/\alpha_2 + 1 =0$. 
So, we get 
$$
0 = \alpha_1^2 - \alpha_1\alpha_2 + \alpha_2^2 = (a_1/a_2)^2 - 3a_0/a_2, 
$$
which implies that $a_1^2 = 3a_0a_2$. 
As the above, the number of corresponding polynomials $f$ is at most $O(H\log H)$. 

Similarly, if $\alpha_1/\alpha_2 = (-1\pm \sqrt{-3})/2$, we can derive that the number of corresponding polynomials $f$ is at most $O(H\log H)$. 
Hence, combining the above estimates with Lemma \ref{lem:PnH*}, we obtain the desired asymptotic formula for $\# I_2^*(H)$.

(iii)
For any polynomial $f\in I_4^*(H)$ of the form $a_4X^4+a_3X^3+a_2X^2+a_1X+a_0$, assume that $a_4 \ne \pm a_0$ and $f$ is non-degenerate. 
Suppose that the roots of $f$ are $\alpha_1,\alpha_2,\alpha_3,\alpha_4$. 
Applying the same arguments as in the monic case, we can assume that 
$$
\alpha_3\alpha_4 = \zeta \alpha_1\alpha_2 
$$
for some root of unity $\zeta$, which is of at most degree two over $\Q$. 
So, we have 
$$
 \alpha_1^{12} \alpha_2^{12} \alpha_3^{-12} \alpha_4^{-12} = 1.  
$$ 
Then, for $n=4$ we can select in \eqref{jons11} the absolute constant $K=12$ (instead of $K= \lfloor c (\log H)^3\rfloor$). 
Therefore, with $n=4$ and $K=12$ in \eqref{eq:general222}, applying the estimate \eqref{eq:general2} (with the factor $13^{9}$ instead of 
the factor $(c(\log H)^3+1)^9$ containing $\log H$) we obtain the required upper bound $O(H^4)$.

\subsection{Proof of Theorem \ref{thm:reducible2}}

(i) 
First, by Lemma \ref{lem:QnH*} the number of polynomials in $R_2^*(H)$ which are divisible by $X+1$ or $X-1$ 
is equal to $6H^2 + O(H)$. 

For polynomials in $R_2^*(H)$ not divisible by $X+1$ or $X-1$, without loss of generality, 
we only need to count the polynomials $f$ of the form $a_2(X-\alpha_1)(X-\alpha_2)$ with $0 < |\alpha_1\alpha_2| \le 1$ 
(that is, the absolute value of the leading coefficient of $f$ is not less than $|f(0)|$), 
where $0 < |a_2| \le H$ and  $\alpha_1,\alpha_2\in \Q$ are non-zero and not equal to $\pm 1$. 
In fact, if $|\alpha_1\alpha_2| > 1$, then we turn to count their reciprocal polynomials. 

If $|\alpha_1\alpha_2|=1$, then from the proof of Lemma \ref{lem:PnH*} for the case $n=2$, we see that the number of corresponding polynomials is at most 
$O(H\log H)$. 

Now, we assume that $0<|\alpha_1\alpha_2| < 1$. 
Since $\alpha_1$ and $\alpha_2$ are multiplicatively dependent, there exists a positive rational number $0 < b < 1$ such that $\alpha_1= \pm b^k, \alpha_2= \pm b^m$ 
for some non-zero integers $k,m$. Assume that  $k\ge m$. 
Let us write $b=b_1/b_2$, where $b_1,b_2$ are positive integers with $b_1<b_2$ and $\gcd(b_1,b_2)=1$.  
Then, $f$ has the form $a_2X^2 \pm a_2(b^k \pm b^m)X \pm a_2b^{k+m}$, that is, 
$$
f(X)= a_2X^2 \pm a_2(b_1^k/b_2^k \pm b_1^m/b_2^m)X \pm a_2b_1^{k+m}/b_2^{k+m}. 
$$ 
Note that $k+m\ge 1$ due to $0 < |\alpha_1\alpha_2| < 1$. 
Since $a_2b_1^{k+m}/b_2^{k+m}$ is a non-zero integer, we have $b_2^{k+m} \mid a_2$. Since $k+m \ge 1$ and $m \ne 0$, we only have two cases: either $k \ge m >0$, or $k>0>m$. 

If $k \ge m>0$, then $k+m \ge 2$. 
Since $b_2^{k+m} \mid a_2$ and $|a_2| \le H$, we must have
$b_2 \le \sqrt{H}$ and $k+m \le \log_{b_2} H$. The number of $a_2$ divisible by $b_2^{k+m}$
and does not exceeding $H$ is at most $H/b_2^{k+m}$.  Thus,
 the number of corresponding polynomials is at most 
\begin{align*}
 O\Big( \sum_{b_2=2}^{\sqrt{H}} \sum_{b_1=1}^{b_2-1} \sum_{\substack{1 \le m \le k \\ k+m\le \log_{b_2} H }} H/b_2^{k+m} \Big) 
& = \quad  O\Big( H \sum_{b_2=2}^{\sqrt{H}} \sum_{b_1=1}^{b_2-1} \sum_{s=2 }^{\log_{b_2} H} s/b_2^s \Big) \\
& =O\Big(  H \sum_{b_2=2}^{\sqrt{H}}  1/b_2\Big) = O(H\log H). 
\end{align*}

If $k>0>m$, then, since $a_2(b_1^k/b_2^k \pm b_1^m/b_2^m)$ is a non-zero integer, we must have 
$b_2^k \mid a_2$ and $b_1^{-m} \mid a_2$.
In view of $m<0$ and $k+m \ge 1$ we obtain $k\ge 2$. 
Set $m^\prime = -m$. Then, $m^\prime \le k -1$. 
So, the number of corresponding polynomials is at most 
\begin{align*}
 O\Big(  \sum_{b_2=2}^{H} \sum_{b_1=1}^{b_2-1} \sum_{k=2}^{\log_{b_2} H} \sum_{m^\prime = 1}^{k-1} H/(b_2^kb_1^{m^\prime}) \Big) 
&  =  O\Big(  H \sum_{b_2=2}^{H} \sum_{k=2}^{\log_{b_2} H} (k+ \log b_2)/b_2^k \Big) \\
& =O\Big(  H \sum_{b_2=2}^{H}  (\log b_2)/b_2^2\Big) = O(H). 
\end{align*}

Therefore, collecting the above estimates, we obtain 
$$
\# R_2^*(H) = 6H^2 + O( H\log H).
$$

(ii) and (iii). 
First, by Lemma \ref{lem:QnH*}, the number of polynomials in $R_n^*(H)$ which are divisible by $X+1$ or $X-1$ is equal to 
$$
2\nu_{n+1} H^n + O(H^{n-1}). 
$$

Let $f \in R_3^*(H)$ be a polynomial of the form $$(b_1X+b_0)(c_1X+c_0)(d_1X+d_0),$$ where all the coefficients of the involved linear factors are in $\Z$. 
Since $b_0/b_1, c_0/c_1,d_0/d_1$ are multiplicatively dependent, if a prime $p$ divides $b_0b_1c_0c_1d_0d_1$, 
then we must have $p^2 \mid b_0b_1c_0c_1d_0d_1$. 
Let $a$ be the positive integer such that $a^2$ is the maximal square divisor of $|b_0b_1c_0c_1d_0d_1|$. 
Then, $|b_0b_1c_0c_1d_0d_1|$ is a divisor of $a^3$. 
Observing that $|b_1c_1d_1| \le H$ and $|b_0c_0d_0| \le H$, we deduce that $a \le H$. 
Then,  employing \eqref{eq:PnPn} we deduce that the number of such polynomials $f$ is at most 
\begin{align*}
O \Big( \sum_{a=1}^{H} \sum_{b_0,b_1,c_0,c_1,d_0,d_1 \in \cD(a^3)} 1 \Big) 
&= O \Big( \sum_{a=1}^{H} \# \cD(a^3)^6 \Big) \\&= O \Big( \sum_{a=1}^{H} \# \cD(a)^{18} \Big) \\&=
O(H (\log H)^{2^{18}-1}).
\end{align*}

For $n\ge 4$, by Lemma \ref{lem:reducible}, the number of polynomials in $R_n^*(H)$, 
which have a factor of degree at least two and do not have an irreducible factor of degree $n-1$,  
is at most $O(H^{n-2}\log H)$.   

It remains to consider the case when such polynomials have an irreducible factor of degree $n-1$ ($n\ge 3$). 
For any polynomial $f \in R_n^*(H)$ of the form 
$(b_1X+b_0)g$, where $g=c_{n-1}X^{n-1} + \cdots + c_1X +c_0 \in \Z[X]$ is irreducible and 
 $b_1 \ne \pm b_0$, one of the following is true:
either the roots of $g$ are multiplicatively dependent, or $b_0/b_1$ and $c_0/c_{n-1}$ 
(both $b_0/b_1$ and $c_0/c_{n-1}$ are not equal to $0, \pm 1$) are multiplicatively dependent, by Lemma \ref{lem:multQ}. 
In the first case, for $n=3$, by Theorem \ref{thm:irre2} (ii), the number of such polynomials $f$ is at most 
$$
 O\Big( \sum_{b_0=1}^{H}\sum_{b_1=1}^{b_0-1} (H/b_0)^{2} \Big) = O(H^2 \log H).
$$
(Here, we assumed without restriction of generality that $1 \leq b_1<b_0$.)
For $n \ge 4$, by Theorem \ref{thm:irre2} (i), the number of such polynomials $f$ is at most 
\begin{align*}
& O\Big( \sum_{b_0=1}^{H}\sum_{b_1=1}^{b_0-1} (H/b_0)^{n-1} (\log(H/b_0))^{2(n-1)^2-(n-1)-1} \Big) \\
& \quad = O(H^{n-1}(\log H)^{2n^2-5n+2} ), 
\end{align*}
where the factor $(\log H)^{2n^2-5n+2}$ can be removed when $n-1$ is a prime or $n=5$, by Theorem \ref{thm:irre2} (ii) and (iii). 

In the second case, without loss of generality we assume that $|b_0/b_1|>1$ and $|c_0/c_{n-1}|>1$, for
otherwise we can apply all the arguments below to $|b_1/b_0|$ or $|c_{n-1}/c_0|$ (note that we have assumed that both of them are not equal to $1$).  
Then, there exists a positive rational number $r>1$ such that $|b_0/b_1|=r^k, |c_0/c_{n-1}|=r^m$ 
for some positive integers $k,m$. 
We write $r=r_1/r_2$ with positive integers $r_1,r_2$ and $\gcd(r_1,r_2)=1$. 
Then, we have $|b_0|=sr_1^k,|b_1|=sr_2^k, |c_0|=tr_1^m, |c_{n-1}|=tr_2^m$ for some positive integers $s,t$. 
Since  $|b_0c_0|=|f(0)| \le H$, we have $str_1^{k+m} \le H$. 
Let $a=|f(0)|$. Then, $r_1,s,t$ are divisors of $a$. If we fix $a,r_1,s,k,m$, then $t$ is also fixed up to a sign. 
Hence, the number of such polynomials $f$ is at most 
$$ O\Big( \sum_{a = 1}^{H}\sum_{r_1,s\in \cD(a)} \sum_{r_2=1}^{r_1-1} \sum_{k, m=1}^{\log_{r_1} H} (H/(sr_1^k))^{n-2} \Big).$$
Since $\sum_{k, m=1}^{\log_{r_1} H} (H/(sr_1^k))^{n-2}=
O(H^{n-2} (sr_1)^{2-n} \log H)$, we can further bound this by
\begin{equation}  \label{eq:final}
  O\Big( H^{n-2} \log H \sum_{a = 1}^{H}\sum_{r_1\in \cD(a)} \frac{1}{r_1^{n-3}} \sum_{s\in \cD(a)} \frac{1}{s^{n-2}} \Big). 
\end{equation}

Assume first that $n=3$. Then, the estimate \eqref{eq:final} becomes 
\begin{align*}
& O\Big( H \log H \sum_{a = 1}^{H} \# \cD(a) \sum_{s\in \cD(a)} \frac{1}{s} \Big) 
 =  O\Big( H \log H\sum_{a = 1}^{H} \# \cD(a) \log a \Big)  \\
& \quad  =  O\Big( H (\log H)^2 \sum_{a = 1}^{H} \# \cD(a) \Big)    =  O\big( H^2 (\log H)^3 \big), 
\end{align*}
where we use 
\eqref{eq:PnPn}. 

For $n=4$ the estimate \eqref{eq:final} becomes 
\begin{align*}
& O\Big( H^2 \log H \sum_{a = 1}^{H} \sum_{r_1\in \cD(a)} \frac{1}{r_1} \sum_{s\in \cD(a)} \frac{1}{s^2} \Big) 
 =  O\Big( H^2 \log H\sum_{a = 1}^{H} \sum_{r_1\in \cD(a)} \frac{1}{r_1} \Big)  \\
& \quad  =  O\Big( H^2 \log H \sum_{a = 1}^{H} \sigma(a)/a \Big)    =  O\big(H^3 \log H\big), 
\end{align*}
where $\sigma(a)=\sum_{d\mid a} d$ and, as is well-known, $\sum_{a = 1}^{H} \sigma(a)/a=O(H)$. 

Finally, for $n\ge 5$, it is easy to see that the estimate \eqref{eq:final} yields $O(H^{n-1}\log H)$.  

Combining all the above estimates, we obtain 
$$
\# R_3^*(H) = 2\nu_4 H^3 + O(H^2(\log H)^3), 
$$
and for $n\ge 4$, 
$$
\# R_n^*(H) = 2\nu_{n+1} H^n + O(H^{n-1}(\log H)^{2n^2-5n+2} ), 
$$
where the factor $(\log H)^{2n^2-5n+2}$ can be replaced by $\log H$ when $n-1$ is a prime or $n=5$.

\section*{Acknowledgements} 

The authors would like to thank Igor E. Shparlinski for introducing them into this topic. The research of Art\=uras Dubickas was funded by a grant (No. S-MIP-17-66/LSS-110000-1274) 
from the Research Council of Lithuania.
The research of Min Sha was supported by the Macquarie University Research Fellowship.

\end{document}